\documentclass[12pt,a4paper]{article}
\usepackage[english]{babel}
\usepackage{amsmath,amssymb,graphicx}
\usepackage{fullpage} 
\usepackage{hyperref}
\usepackage{mathtools}
\numberwithin{equation}{section}
\usepackage[utf8]{inputenc}
\usepackage{amsfonts}
\usepackage{amsthm}
\usepackage{xcolor}
\usepackage{enumerate}
\usepackage{cleveref}

\def\cH{\mathcal{H}}
\def\cK{\mathcal{K}}\def\cC{\mathcal{C}}
\def\cN{\mathcal{N}}

\def\cS{\mathcal{S}}\def\cD{\mathcal{D}}

\def\bR{\mathbb{R}}

\def\bE{\mathbb{E}}
\def\bP{\mathbb{P}}

\def\fX{\mathbf{X}}
\def\fB{\mathbf{B}}
\def\fY{\mathbf{Y}}

\theoremstyle{plain}
\newtheorem{theorem}{Theorem}[section]
\newtheorem{proposition}[theorem]{Proposition}
\newtheorem{lemma}[theorem]{Lemma}
\newtheorem{corollary}[theorem]{Corollary}

\theoremstyle{definition}
\newtheorem{definition}[theorem]{Definition}

\theoremstyle{remark}
\newtheorem{remark}[theorem]{Remark}

\title{Approximate Transitivity of Young Translation on Rough Paths}

\date{}

\author{Carlo Bellingeri  \thanks{ \texttt{carlo.bellingeri@uha.fr}  IRIMAS, Université de Haute-Alsace, Mulhouse, France}\and Paul Gassiat\thanks{\texttt{paul.gassiat@univ-eiffel.fr} LAMA, Université Gustave Eiffel, Marne-la-Vallée, France \& IUF}
\and Hugo Nouaille \thanks{\texttt{hugo.nouaille@univ-lorraine.fr}  IECL, Université de Lorraine, Vandœuvre-lès-Nancy, France}
}

\begin{document}
 \medskip
 
 \maketitle
 
 \begin{abstract}
\noindent We show that Young translation has dense orbits in the space of rough paths: for any two geometric rough paths, one can translate the first by a sequence of smooth paths so that it converges to the second in rough path topology. As applications, we obtain full-support criteria for rough paths arising from random series and Gaussian processes, including non-centered fractional Brownian rough paths.
\end{abstract}

 \textbf{Keywords:} Geometric rough paths, Young translation, support theorem.

\medskip

\textbf{MSC 2020:} 	60L20, 	60G15, 	60G17.


\section{Introduction}

One of the most elementary approaches to proving support theorems for stochastic
processes relies on two ingredients: quasi-invariance under deterministic shifts and
density of admissible shifts. For example, let $S = \operatorname{supp} \mu$ be the
support of the law $\mu$ of a random variable $X$ taking values in a separable Banach
space $E$. Assume that there exists a dense subset $H \subset E$ such that for every
$h \in H$ the law of $X + h$ is equivalent to $\mu$. Then necessarily $S = E$.
Indeed, quasi-invariance implies that for every $h \in H$, $S + h = S$. Fix $x \in S$
and $y \in E$. Since $H$ is dense in $E$, there exists a sequence $(h^n) \subset H$
such that $x + h^n \to y$. For each $n$, the identity $S + h^n = S$ implies
$x + h^n \in S$. Since $S$ is closed, passing to the limit yields $y \in S$, and
hence $E = S$.

When studying support theorems for differential equations of the form
\[
    \mathrm{d}Y_t = \sum_{i=1}^d V_i(Y_t)\,\mathrm{d}X^i_t,
    \qquad Y_0 = y \in \mathbb{R}^e,
\]
with bounded smooth vector fields $V_i : \mathbb{R}^e \to \mathbb{R}^e$ and a driving
path $X : [0,1] \to \mathbb{R}^d$, rough path theory (see
\cite{lyons1998, frizbook, FH20}) shows that the natural object governing the support
of $Y$ is a rough path lift $\mathbf{X}$ of the process $X$. For a smooth path $X$,
the canonical lift is given by the collection of iterated integrals
\begin{equation}\label{eq:p_lift}
    S_{s,t}^{N}(X) = \left(
        X_t - X_s,\;
        \ldots,\;
        \int_{s \leq s_1 \leq \cdots \leq s_{N} \leq t}
        \mathrm{d}X_{s_1} \otimes \cdots \otimes \mathrm{d}X_{s_{N}}
    \right),
\end{equation}
for any integer $N\geq 1$. The rough path lift $\mathbf{X}$ of a stochastic process $X$ is then constructed as a suitable limit of such lifts along smooth approximations at some fixed order $N$, with
the iterated stochastic integrals defined in an appropriate sense. This enhancement restores continuity of the solution
map for differential equations driven by irregular signals. Consequently, support
theorems in rough path topologies provide a natural framework for extending the
classical Stroock--Varadhan support theorem \cite{SV72}. This approach has proved
especially effective for Gaussian rough paths, where the structure of the associated
Cameron--Martin space can be exploited to characterize the support explicitly; see
for instance \cite{LQZ02, FV10gaussian}.

In rough path topology, however, the preceding linear support argument cannot be
transplanted directly. A first issue is that rough path space is not a linear space,
so addition is not available. The natural replacement in this context is given by
Young translation \cite{frizbook}: given a rough path $\mathbf{X}$ and a sufficiently
regular path $h$, one can define the Young translation $\mathbf{X} \boxplus h$, a new rough path obtained by summing the paths at the first level and then propagating appropriately at the
higher levels. When $\mathbf{X}=S^N(X)$ for some smooth path $X$, this reduces to $S^N(X + h)$, see section \ref{sec:2}. The notation varies in the literature; we follow \cite{Bel22}, where
generalisations of sums of geometric rough paths are studied.

In order to implement the simple support argument above, the missing ingredient is a density result on orbits of Young translation. This is precisely the main result of this article in Theorem \ref{thm_main_result}, which can be stated in simple terms as follows:
\begin{theorem}\label{thm_main}
	Let $\mathbf X$, $\mathbf Y$ be two arbitrary $\alpha$-H\"older geometric rough paths over $\mathbb R^d$. Then, there exists a sequence of paths $(h^n)\subset C^{\infty}([0,1],\mathbb R^d)$ such that 
	\begin{equation}\label{eq:main_conv}
(\mathbf X\boxplus h^n)\rightarrow \mathbf Y
	\end{equation}
	 in  $\alpha$-H\"older rough path distance.
	 \end{theorem}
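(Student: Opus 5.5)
The plan is to reduce the statement to the single case $\mathbf Y=0$ (the trivial rough path) and then prove that case by a local ``cancellation'' construction on a fine grid. Two structural facts about Young translation from Section~\ref{sec:2} are used throughout. First, it is a group action: $(\mathbf X\boxplus h)\boxplus g=\mathbf X\boxplus(h+g)$ and $\mathbf X\boxplus 0=\mathbf X$. Second, for a fixed smooth $g$, the map $\mathbf Z\mapsto \mathbf Z\boxplus g$ is continuous, indeed locally Lipschitz, in the $\alpha$-H\"older rough path distance.

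Suppose we can find smooth $h^n$ with $\mathbf X\boxplus h^n\to 0$. Then for any smooth $y$ we get $\mathbf X\boxplus(h^n+y)=(\mathbf X\boxplus h^n)\boxplus y\to 0\boxplus y=S^N(y)$. Hence the closure of the orbit $\{\mathbf X\boxplus h: h\in C^\infty\}$ contains every canonical lift of a smooth path. Since $\alpha$-H\"older geometric rough paths are, by definition, limits of such lifts in the $\alpha$-H\"older distance, this closure contains $\mathbf Y$. A diagonal argument then produces the sequence in \eqref{eq:main_conv}.

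To obtain $\mathbf X\boxplus h^n\to 0$, I will fix the dyadic grid $t_i=i2^{-n}$ and build $h^n$ interval by interval. On each $[t_i,t_{i+1}]$ I want a smooth $g_i$, vanishing to all orders at the endpoints so that the pieces glue into a smooth path, such that:
\begin{itemize}
\item[(a)] $(\mathbf X\boxplus g_i)_{t_i,t_{i+1}}=\mathbf 1$ exactly, i.e.\ all levels of the translated increment over the interval vanish;
\item[(b)] the homogeneous norm of $\mathbf X\boxplus g_i$ on $[t_i,t_{i+1}]$ is controlled by $C\,\|\mathbf X\|_{\alpha;[t_i,t_{i+1}]}$ in the scale-invariant sense, i.e.\ by $C\,\omega(2^{-n})\,|t-s|^{\alpha}$ for subintervals $[s,t]$.
\end{itemize}
Here $\omega(\delta)=\sup_{|t-s|\le\delta}\|\mathbf X_{s,t}\|/|t-s|^\alpha\to0$ is the little-H\"older modulus, which tends to zero because geometric rough paths are limits of smooth lifts.

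Granting (a) and (b), the translated path $\mathbf Z^n=\mathbf X\boxplus h^n$ has trivial increments between grid points. By Chen's relation, for $s<t$ its increment $\mathbf Z^n_{s,t}$ reduces to the product of the two boundary pieces inside the extreme intervals. Each of these has homogeneous size at most $C\omega(2^{-n})|t-s|^\alpha$, so $\|\mathbf Z^n\|_\alpha\lesssim\omega(2^{-n})\to0$. This gives convergence to $0$ in the $\alpha$-H\"older distance, using the equivalence of homogeneous and inhomogeneous metrics near the identity.

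The main obstacle is the local controllability statement (a)--(b) with bounds uniform in the scale. My approach is a scaling plus compactness argument. By the dilation and reparametrization invariance of Young translation, it suffices to treat $[0,1]$ with $\|\mathbf X\|_\alpha\le \varepsilon$ for some fixed small $\varepsilon$. There I will take $g=-x+\ell$, where:
\begin{itemize}
\item $x$ is a smooth path whose level-one increments follow those of $\mathbf X$, which makes the first level of $\mathbf X\boxplus(-x)$ identically zero;
\item $\ell$ is a correction loop, chosen by a Chow-type argument, that kills the remaining higher-level increment $(\mathbf X\boxplus(-x))_{0,1}\in G^N$.
\end{itemize}
The quantitative input is a ball-box estimate: every group element of homogeneous norm $r$ is the signature of a smooth loop (with flat endpoints) of length $\lesssim r$. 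However, Young translation by $\ell$ does not simply multiply increments, because of the cross iterated integrals between $\mathbf X\boxplus(-x)$ and $\ell$. I therefore expect to need an implicit-function or fixed-point step: the map $\ell\mapsto(\mathbf X\boxplus(-x+\ell))_{0,1}$ is a small perturbation, of size $O(\varepsilon)$ in the cross terms, of $\ell\mapsto S^N(\ell)_{0,1}\otimes(\mathbf X\boxplus(-x))_{0,1}$, and the latter is locally surjective with controlled inverse by the ball-box theorem. If a clean fixed-point argument fails, my fallback is to settle for $\|(\mathbf X\boxplus g_i)_{t_i,t_{i+1}}\|\le \eta\,2^{-n\alpha}$ with $\eta$ small and to iterate the correction on finer grids, proceeding level by level.
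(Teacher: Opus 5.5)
The reduction to $\mathbf Y=\mathbf 1$ through $\mathbf X\boxplus(h^n+y)=(\mathbf X\boxplus h^n)\boxplus y$ is the paper's argument, and so is the dyadic patching via Chen's relation once (a)--(b) are available. The gap is in the step you flag as the main obstacle. What is needed is a uniform, scale-invariant exact control, i.e.\ $\inf\{\|g\|_{\alpha,1\text{-var}} : (\mathbf X\boxplus g)_{0,1}=\mathbf 1\}\le M\|\mathbf X\|_{\alpha}$ for all $\mathbf X$, and the argument you sketch for it does not work.

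First, no smooth $x$ makes $\pi_1(\mathbf X\boxplus(-x))=X-x$ identically zero. That would force $x=X=\pi_1(\mathbf X)$, which is only $\alpha$-H\"older, and $-X$ is not even an admissible translation. You can only cancel $X_{0,1}$, so $\mathbf Z=\mathbf X\boxplus(-x)$ keeps a nontrivial first level inside the interval.

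More seriously, the smallness $\|\mathbf X\|_\alpha\le\varepsilon$ buys nothing, because Young translation is dilation-homogeneous: $\delta_r(\mathbf Z\boxplus\ell)=\delta_r\mathbf Z\boxplus r\ell$. Take $\|\mathbf Z\|_\alpha\sim\varepsilon$ and $\|\ell\|_{1\text{-var}}\sim\varepsilon$, which is what ball-box gives. Then the level-$k$ cross terms (e.g.\ $\int_0^1 Z_{0,u}\otimes \mathrm{d}\ell_u$ at level $2$) are of size $\varepsilon^k$. That is exactly the size of the level-$k$ component of $\mathbf Z_{0,1}$ you are trying to cancel. Rescaling by $\delta_{1/\varepsilon}$ brings you back to $\|\mathbf Z\|_\alpha\le 1$ with cross terms of order one. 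So $\ell\mapsto(\mathbf Z\boxplus\ell)_{0,1}$ is not a small perturbation of $\ell\mapsto S^N(\ell)_{0,1}\otimes\mathbf Z_{0,1}$ relative to the non-degeneracy of the latter at that scale, and no contraction or implicit-function step follows. The ball-box theorem gives a loop with a length bound, not a right inverse with $C^1$ bounds able to absorb an $O(1)$ error. The fallback (approximate cancellation iterated on finer grids) is not an argument as stated: nothing guarantees the residuals are summable, and every new correction interacts with the path through cross terms of the same homogeneous order.

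The missing idea is to perturb around a \emph{non-singular exact control of the full translated dynamics}, not around the free dynamics. The paper gets this from the Chow--Rashevskii theorem for translations (Theorem~\ref{thm_chow_transl}, from \cite{Gassiat24}): for each single $\mathbf X$ there is $h$ with $(\mathbf X\boxplus h)_1=\mathbf 1$ and surjective differential. It then argues by contradiction and compactness. The unit ball of $\mathcal C^{\alpha}_{\mathbf 1}$ is compact in $\rho_{\alpha'}$ for $\alpha'<\alpha$. For $\mathbf Y$ that is $\rho_{\alpha'}$-close to a limit point $\mathbf X$, the maps $x\mapsto(\mathbf Y\boxplus(h+x\cdot u))_1$ are $C^1$-close to the one for $\mathbf X$. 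Here the perturbation genuinely is small, since it is controlled by $\rho_{\alpha'}(\mathbf Y,\mathbf X)$. Proposition~\ref{inversion} then yields exact controls of bounded norm, giving \eqref{finitude}, and homogeneity (Proposition~\ref{prop_apriori}) upgrades this to the linear bound your (b) requires. You announce ``scaling plus compactness'', but compactness never enters your sketch, and it is precisely what deals with the order-one cross terms.
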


Let us emphasize that the nonlinearity of rough path space creates a genuine 
difficulty. Consider the case where one wants to translate $\mathbf{X}$ to 
$\mathbf{Y} = \mathbf{1}$, the constant rough path. Recall that any 
$\alpha$-H\"older geometric rough path over $\mathbb{R}^d$ can be obtained as 
the limit of smooth lifts $S^{\lfloor 1/\alpha \rfloor}(x^n) = \mathbf{1} 
\boxplus x^n$ for some sequence $(x^n) \subset C^{\infty}([0,1], \mathbb{R}^d)$. 
In analogy with the linear case, a natural candidate would be to translate 
$\mathbf{X}$ along $(-x^n)$. However, in contrast with the linear case, 
\[
S^{\lfloor 1/\alpha \rfloor}(x^n) = \mathbf{1} \boxplus x^n \to \mathbf{X}
\;\;\not\Rightarrow\;\;
\mathbf{X} \boxplus (-x^n) \to \mathbf{1}\,.
\]

This is illustrated by the following example (taken from \cite{Coutin07}). 
Consider the pure area rough path $\mathbf{X}$ corresponding to the 
approximating sequence
\[
x^n_t = n^{-1/2} \begin{pmatrix} \cos(nt) \\ \sin(nt) \end{pmatrix}.
\]
Then (see e.g.\ \cite{frizbook}, Exercise~2.10),
\[
S^2(x^n) \to \mathbf{X}, \qquad \mathbf{X}_{s,t} = \left(0,\,\tfrac{1}{2}(t-s)A\right),
\]
where
\[
A = \begin{pmatrix} 0 & 1 \\ -1 & 0 \end{pmatrix}.
\]
Since $\mathbf{X}$ has vanishing first-level component, by the explicit identity \eqref{eq:coordinates_Young} one has the uniform convergence
\begin{align*}
(\mathbf{X} \boxplus (-x^n))_{s,t}
&= \left(x^n_{s}- x^n_{t},\; \tfrac{1}{2}(t-s)A + \int_s^t (x^n_{u}- x^n_{s}) \otimes \mathrm{d}x^n_u \right) \to  \left(0,\; (t-s)A \right),
\end{align*}
where we used that $x^n_{s,t} \to 0$  and $\int_s^t x^n_{s,u} \otimes \mathrm{d}x^n_u 
\to \tfrac{1}{2}(t-s)A$ uniformly. In other words, rather than  converging to the constant path $\mathbf{1}$, the translated rough path  converges to another pure area rough path with area accumulating at double 
the rate.

However, if one considers the paths approximating the opposite pure area
\[
{h}^n_t=n^{-1/2}\begin{pmatrix}
	\cos( - n t)\\
	\sin(- n t)
\end{pmatrix},
\]
then the same computation as above yields the uniform convergence
\begin{align*}
(\mathbf{X} \boxplus (-h^n))_{s,t}
&= \left(h^n_{s}- h^n_{t},\; \tfrac{1}{2}(t-s)A + \int_s^t (h^n_{u}- h^n_{s}) \otimes \mathrm{d}h^n_u \right)  \to  \mathbf{1}\,,
\end{align*}
and it is also straightforward to check that the convergence holds in rough path topology. Hence in this case, we are still able to find an explicit sequence of smooth paths $(k^n)$, such that $\mathbf{X} \boxplus k^n \to \mathbf 1$.

In the case of a general rough path, it is not clear how to define explicitly such a suitable sequence. Instead, we will rely on an abstract controllability result from \cite{Gassiat24}, which implies that we can find smooth paths such that, on arbitrarily fine intervals, the rough path translation has trivial increments. By a careful control of the $1$-variation norm of these paths, we are able to push this to convergence in rough path metric.

As an application of our main theorem, we then obtain support criteria for rough-path-valued random variables. If \(F\) is a nonempty closed subset of rough path space which is stable under Young translations by a dense class of controls, then the density of Young-translation orbits forces \(F\) to be the whole rough path space. Applied to the support of a random rough path, this gives full support once one has verified stability under a sufficiently rich family of deterministic translations. We use this principle for random series and Gaussian rough paths, including non-centered fractional Brownian rough paths.

\begin{remark}
It is natural to ask whether Theorem~\ref{thm_main} extends to non-geometric 
rough paths, such as $\mathcal{H}$-rough paths over a Hopf algebra $\mathcal{H}$ 
in the sense of \cite{Tapia2018}. Our approach does not directly carry over to 
this setting: the key obstruction is that Chow's theorem 
\cite[Theorem~7.28]{frizbook} (see also Theorem~\ref{thm_chow_transl}), which 
is central to our argument, fails to hold in general for non-geometric rough 
paths. More precisely, while one can define lift maps analogous to 
\eqref{eq:p_lift} in this setting, surjectivity onto the full space of rough 
paths is lost, because the iterated integrals of a path must satisfy additional 
algebraic relations imposed by the Hopf algebra structure.

A partial remedy is available when the governing Hopf algebra $\mathcal{H}$ is 
\emph{cofree}, i.e.\ isomorphic as a coalgebra to the cofree cocommutative 
coalgebra on its primitive elements. As shown in \cite{Bel26} in the case of 
branched rough paths, any $\mathcal{H}$-rough path $\mathbf{X}$ over a 
finite-dimensional vector space $V$ can be canonically identified with an 
anisotropic geometric rough path $\widehat{\mathbf{X}}$ over an enlarged space 
$\widehat{V} \supset V$ still finite-dimensional. An analogue of Theorem~\ref{thm_main} then holds 
naturally for $\widehat{\mathbf{X}}$ in the anisotropic geometric setting 
without essential difficulty, by taking approximating paths 
$(h^n) \subset C^{\infty}([0,1], \widehat{V})$.

The remaining challenge is to express the translation operator 
$\widehat{\mathbf{X}} \boxplus h^n$ back in the original coordinates. This 
requires starting from paths $(h^n) \subset C^{\infty}([0,1], \widehat{V})$ 
living in the full enlarged space and defining a suitable notion of rough 
differential equation in the extra coordinates, whose coefficients encode 
the Hopf algebra structure of $\mathcal{H}$, in order to give meaning to 
the translation in the original coordinates. We expect this to be essentially straightforward, amounting to careful but routine bookkeeping of the Hopf 
algebra structure, and it should cover both quasi-geometric rough paths \cite{Bel23} and branched rough paths as special cases.
\end{remark}

\begin{remark}
While it is not always true that a given geometric rough path $\mathbf X$ admits a sequence $(x^n)$ such that both $S^{\lfloor 1/\alpha \rfloor}(x^n) \to \mathbf X$ and $\mathbf{X} \boxplus (-x_n) \to \mathbf 1$, it is known to hold in many cases of interest. This is the case if $(x^n)$ is a good rough path sequence in the sense of \cite{Coutin07}, namely if $(x^n, \mathbf X) \to_n (\mathbf X, \mathbf X)$ in rough path metric. In particular, this is true almost surely if $\mathbf X$ is the lift of a (fractional) Brownian motion, or more generally of Gaussian processes satisfying the assumptions of \cite{FV10gaussian}. Proving such a convergence is the crucial step in the proofs of existing rough path support theorems, cf e.g. \cite[Section 15.8]{frizbook}. The main interest of our result is that it allows to bypass this step, at least in the case where the Cameron-Martin space of the Gaussian process is known to be dense.
\end{remark}

The paper is organised as follows. In Section~\ref{sec:2}, we introduce 
the necessary preliminaries and notation, and we prove a simple local surjectivity result in the context of maps with values in the step-\(N\) free nilpotent Lie group. Section~\ref{sec:3} is devoted to a precise statement and proof of Theorem~\ref{thm_main}. In Section~\ref{sec:4}, we apply Theorem~\ref{thm_main} to derive a new criterion for support  theorems in rough path theory.


\subsection*{Acknowledgements}
C.B. gratefully acknowledges funding from the Université de Haute-Alsace through the research program “Contrat de Bienvenue”.  H.N. gratefully acknowledges support from the ERC Starting Grant Low Regularity Dynamics via Decorated Trees (LoRDeT). The views and opinions expressed are, however, those of the authors only and do not necessarily reflect those of the European Union or the European Research Council.

\section{Preliminaries}\label{sec:2}
\subsection{Paths, Rough paths and Young translations}
We briefly recall here the main properties of \emph{paths}, \emph{geometric rough paths} and their translation that will be used throughout the paper. More details can be found in \cite{frizbook}.

In the whole paper, all paths will be functions $h \colon [0,1] \to \bR^d$ starting from $0$, i.e. with $h(0)=0$. For any such path $h$, we denote its \emph{increments} by $h_{s,t} := h_t - h_s$ for $s,t \in [0,1]$. For $p \geq 1$, we denote by $C^{p\text{-var}}_0$ the set of continuous paths starting from $0$ such that
\[
\| h \|_{p\text{-var}}^p := \sup_{0 = t_0 \leq \ldots \leq t_m = 1}
    \sum_{i=0}^{m-1} |h_{t_i,t_{i+1}}|^p < \infty\,,
\]
where $|\cdot|$ is the usual euclidean norm on $\mathbb{R}^d$. The quantity $\| h \|_{p\text{-var}}$ is called the \emph{$p$-variation norm} of $h$. This definition can be restricted to any subinterval $[s,t] \subset [0,1]$, in which case we write $\| h \|_{p\text{-var};[s,t]}$. For $0 < \alpha \leq 1$, we define $C^{\alpha}_0$ as the space of \emph{$\alpha$-H\"older continuous} paths starting from $0$ and such that
\[
\| h \|_{\alpha} := \sup_{s \neq t} \frac{|h_{s,t}|}{|t-s|^{\alpha}} < \infty.
\]
Clearly, $C^{\alpha}_0 \subset C^{(1/\alpha)\text{-var}}_0$ with continuous inclusion. Both structures can be combined: for $0 < \alpha \leq 1$ and $q \geq 1$ satisfying $\alpha + \frac{1}{q} > 1$, we define $C^{\alpha, q\text{-var}}_0$ as the set of continuous paths starting from $0$ such that
\[
\| h \|_{\alpha,q\text{-var}} :=
    \sup_{s \neq t} \frac{\| h \|_{q\text{-var};[s,t]}}{|t-s|^{\alpha}} < \infty\,.
\]
We also let $C^{0;\alpha,q\text{-var}}_0$ be the closure of $C^{\infty}_0$ functions in $C^{\alpha, q\text{-var}}_0$ (i.e. for the above norm). We note that it contains $C^{\alpha', q'\text{-var}}_0$ for any $\alpha'>\alpha, q'<q$. 

All these spaces can be extended to the context of geometric rough paths by considering paths with values in the step-\(N\) free nilpotent Lie group \((G^N(\mathbb{R}^d), \otimes)\), for some integer \(N \geq 1\), see \cite[Theorem 7.30]{frizbook}. We will denote by \(\mathbf{1}\) its unit element; for \(\lambda > 0\), we will also use \(\delta_\lambda \colon G^N(\mathbb{R}^d) \to G^N(\mathbb{R}^d)\) its intrinsic dilation and by \(|\cdot|\) with a slight abuse of notation  the Carnot--Carath\'eodory norm on \(G^N(\mathbb{R}^d)\) (see \cite[Theorem 7.32]{frizbook}), which satisfies $|\delta_\lambda(g)| = \lambda |g| $ for all $g \in G^N(\mathbb{R}^d)$, $\lambda > 0$.

In this context for any path $\fX \colon [0,1] \to G^N(\bR^d)$, we define its increments as
\[
\fX_{s,t} := \fX_s^{-1} \otimes \fX_t, \qquad s,t \in [0,1].
\]
 and we define for any $p\geq1$ the space of \emph{weakly geometric $p$-variation rough paths} $\cC^{p\text{-var}}_{\mathbf{1}}$, as the space  of paths $\fX \colon [0,1] \to G^{\lfloor p \rfloor}(\bR^d)$ starting at ${\mathbf{1}}$ and such that
\[
\| \fX \|_{p\text{-var}}^p :=
    \sup_{0 = t_0 \leq \ldots \leq t_m = 1}
    \sum_{i=0}^{m-1} |\fX_{t_i,t_{i+1}}|^p < \infty.
\]
Similarly, for $0<\alpha \leq1$ we define $\cC^{\alpha}_{\mathbf{1}} \subset \cC^{1/\alpha\text{-var}}_{\mathbf{1}}$ as the space of \emph{weakly geometric $\alpha$-H\"older rough paths}, i.e. $G^{\lfloor 1/ \alpha\rfloor}(\bR^d)$-valued  paths $\fX$ such that
\[
\sup_{s \neq t} \frac{|\fX_{s,t}|}{|t-s|^{\alpha}} < \infty.
\]
Two geometric rough paths $\fX, \fY$ can be compared using the inhomogeneous $p$-variation rough path distance
\[
\rho_{p\text{-var}}(\fX, \fY) :=
\sum_{k=1}^{\lfloor p \rfloor}
    \sup_{0 = t_0 \leq \ldots \leq t_m = 1}
    \left(
        \sum_{i=0}^{m-1}
        \|
            \pi_k\big( \fX_{t_i,t_{i+1}} - \fY_{t_i,t_{i+1}} \big)
        \|^{\frac{p}{k}}
    \right)^{\frac{k}{p}},
\]
where $\pi_k \colon G^N(\bR^d) \to (\bR^d)^{\otimes k}$ denotes the canonical projection onto the $k$-th tensor level and $\|\cdot\|$ is the usual tensorisation of the euclidean norm on $(\bR^d)^{\otimes k}$. The analogous distance for weakly geometric $\alpha$-H\"older rough paths is
\[
\rho_{\alpha}(\fX, \fY) :=
\sum_{k=1}^{\lfloor 1/\alpha \rfloor}
 \sup_{s \neq t} \frac{\|\pi_k(\fX_{s,t} - \fY_{s,t})\|}{|t-s|^{k\alpha}}\,.
\]

In contrast to much of the recent literature, where the term \emph{rough path} typically refers to the weakly geometric variety defined above, we maintain a distinct notion of geometric rough paths. A path $\fX \in \cC^{p\text{-var}}$ is called a \emph{$p$-variation (strictly) geometric rough path} if there exists  a sequence of paths $(x^n)\subset  C^{1\text{-var}}$ such that  
\begin{equation}\label{eq_geom_approx}
 \rho_{p\text{-var}}(S^{\lfloor p \rfloor}(x^n), \fX)\to 0\,,
 \end{equation}
 where $S^{\lfloor p \rfloor}(x)$ denotes the canonical lift of a bounded variation path $x$ to $G^{\lfloor p \rfloor}(\bR^d)$, see \eqref{eq:p_lift}. The space of such paths is denoted $\cC^{0;p\text{-var}}$.  The analogous definition holds in the H\"older setting, yielding the space $\alpha$-H\"older (strictly) geometric rough paths $\cC^{0;\alpha}_g$. That being said, by \cite[Theorem 8.12]{frizbook} any weakly geometric $p$-variation rough path $\mathbf{X}$ can be approximated by a sequence $(x^n)\subset C^{1\text{-var}}_0$  such that
\begin{equation}\label{eq:approx}
\lim_{n\to \infty} S^{\lfloor p \rfloor}(x^n) = \mathbf{X}\,, \text{ uniformly over $[0,1]$}\,,
    \qquad \sup_{n\geq 0} \|S^{\lfloor p \rfloor}(x^n)\|_{p\text{-var}} < +\infty.
\end{equation}

Paths and rough paths can be combined in a standard way; see \cite[Theorem 9.33]{frizbook}. Let \(1 \le q < 2\) and \(p\) be such that \(1/p + 1/q > 1\). There exists a canonical way to add a path \(h \in C^{q\text{-var}}_0\) to a rough path \(\mathbf{X} \in \mathcal{C}^{p\text{-var}}_{\mathbf{1}}\), yielding a new rough path \(\mathbf{X} \boxplus h \in \mathcal{C}^{p\text{-var}}_{\mathbf{1}}\) such that
\[
\pi_1(\mathbf{X} \boxplus h) = \pi_1(\mathbf{X}) + h.
\]
The rough path \(\mathbf{X} \boxplus h\) is called the \emph{Young translation} of \(\mathbf{X}\) by \(h\). Viewing \(\mathbf{X}\) as encoding the iterated integrals of the path \(X := \pi_1(\mathbf{X})\), one obtains the following explicit expression: for each level \(1 \le k \le N\),
\begin{equation}\label{eq:coordinates_Young}
\pi_k(\mathbf{X} \boxplus h)_{s,t}
= \sum_{\varepsilon \in \{0,1\}^k}
\int_{s \le t_1 \le \cdots \le t_k \le t}
\mathrm{d}(X,h)^{\varepsilon_1}_{t_1} \otimes \cdots \otimes \mathrm{d}(X,h)^{\varepsilon_k}_{t_k},
\end{equation}
where, for \(\varepsilon = (\varepsilon_1,\dots,\varepsilon_k) \in \{0,1\}^k\), we use the convention
\[
\mathrm{d}(X,h)^{\varepsilon_i}_t =
\begin{cases}
\mathrm{d}X_t, & \text{if } \varepsilon_i = 0,\\
\mathrm{d}h_t, & \text{if } \varepsilon_i = 1.
\end{cases}
\]
Expression \eqref{eq:coordinates_Young} is only formal and one need to interpret the integrations in a rigorous way. An exact consecutive block of zeros of length \(j\) (say at positions \(i,\dots,i+j-1\)) contributes  the iterated integral \(\mathrm{d}\pi_j(\mathbf{X}_{t_i,t_{i+j-1}})\), taken from the rough path \(\mathbf{X}\) at level \(j\). Likewise, an exact consecutive block of ones of length \(\ell\) at position $k,\dots ,k+l-1$ contributes the iterated integral \(\mathrm{d}\pi_\ell(S^{\lfloor p \rfloor}(h)_{t_k,t_{k+l-1}})\). At the interface between two consecutive blocks the corresponding integrals are understood as Young integrals. These involve an integrand of finite \(p/j\)-variation and an integrator of finite \(q/\ell\)-variation, and are well-defined under the condition \(1/p + 1/q > 1\) and the integration domain which explain the order of integration. An alternative definition of $\mathbf{X} \boxplus h$ can be given by taking the pointwise limit
\begin{equation}\label{eq:pointwise}
(\mathbf{X} \boxplus h)_t=\lim_{n \to \infty} (S^{\lfloor p \rfloor}(x^n)\boxplus h)_t= \lim_{n \to \infty} S_t^{\lfloor p \rfloor}(x^n + h)\, \text{    for all $t \in [0,1]$}
\end{equation}
for any sequence $(x^n)$ of $C^{1\text{-var}}$ elements satisfying \eqref{eq:approx}.

Following again \cite[Theorem 9.33]{frizbook}, the translation operation 
\[(\mathbf{X}, h)\in  \mathcal{C}^{p\text{-var}}_{\mathbf{1}} \times C^{q\text{-var}}_0 \to \mathbf{X} \boxplus h \in \mathcal{C}^{p\text{-var}}_{\mathbf{1}}\]
 is continuous, and in fact locally Lipschitz continuous. That is for any $M > 0$ there exists a constant $C(M) > 0$ such that if 
$\|\mathbf{X}\|_{p\text{-var}}, \|\mathbf{X}'\|_{p\text{-var}}, \|h\|_{q\text{-var}}, \|h'\|_{q\text{-var}} \le M$, then
\begin{equation}\label{eq_loc_Lip}
\rho_{p\text{-var}}(\mathbf{X}\boxplus h, \mathbf{X}'\boxplus h') \le C(M) \bigl( \rho_{p\text{-var}}(\mathbf{X},\mathbf{X}') + \|h-h'\|_{q\text{-var}} \bigr).
\end{equation}
Moreover, this map remains locally Lipschitz continuous when restricted to H\"older rough paths and paths in $C^{\alpha; p\text{-var}}$
\[(\mathbf{X}, h) \in \mathcal{C}^{\alpha}_{\mathbf{1}} \times C^{\alpha,q\text{-var}}_0 \to \mathbf{X} \boxplus h \in \mathcal{C}^{\alpha}_{\mathbf{1}}\,.\] 
In addition to this properties, we can also look at Young translations as solutions of a specific RDE.
\begin{proposition}
Let $1\le q < 2$ and $p \ge 1$ with $1/p + 1/q > 1$. For any path \(h \in C^{q\text{-var}}_0\) and any  rough path \(\mathbf{X} \in \mathcal{C}^{p\text{-var}}_{\mathbf{1}}\), the  Young translation $\mathbf{Y}=\mathbf{X} \boxplus h$ is the unique solution to the tensor valued RDE 
\begin{equation}\label{eq:RDE_translation}
\mathrm{d}\mathbf{Y}_t=  \sum_{i=1}^d  V_i(\mathbf{Y}_t)\mathrm{d}\mathbf{X}^i_t+ \sum_{i=1}^d V_i(\mathbf{Y}_t)\mathrm{d}h^i_t\,,\quad  \mathbf{Y}_0=\mathbf{1}
\end{equation}
where  $V_i(y):=y\otimes e_i$ are the right-invariant vector fields on the free step-$\lfloor p \rfloor$
nilpotent Lie group $G^{\lfloor p \rfloor}(\bR^d)$ and $\{e_i\colon i\in \{1,\ldots d\}\}$ is the canonical basis of $\mathbb{R}^d$. In case when $ \mathbf{X}\in \mathcal{C}^{\alpha}_{\mathbf{1}} $  the result holds by taking $h\in C^{\alpha,q\text{-var}}_0$ with $\alpha+ 1/q>1$.
\end{proposition}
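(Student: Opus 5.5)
We prove the statement first for smooth drivers, where both sides are classical objects, and then pass to the limit using the approximation \eqref{eq:approx} together with continuity of Young translation and of the RDE solution map.

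\emph{Smooth case.} Let $\mathbf{X}=S^{N}(x)$ with $N=\lfloor p\rfloor$ and $x\in C^{1\text{-var}}_0$. By \eqref{eq:pointwise}, $\mathbf{X}\boxplus h=S^{N}(x+h)$, at least when $h$ is also of bounded variation. The signature $y_t:=S^N(x+h)_{0,t}$ solves the linear ODE
\[
\mathrm{d}y_t=y_t\otimes \mathrm{d}(x+h)_t=\sum_{i=1}^d V_i(y_t)\,\mathrm{d}x^i_t+\sum_{i=1}^d V_i(y_t)\,\mathrm{d}h^i_t,\qquad y_0=\mathbf{1},
\]
in the truncated tensor algebra. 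This follows from differentiating the iterated integrals \eqref{eq:p_lift} in the upper endpoint, and it is Chen's identity in differential form. Since the vector fields $V_i(y)=y\otimes e_i$ are linear, the ODE has a unique solution, so \eqref{eq:RDE_translation} holds in this case.

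\emph{Meaning of the RDE.} For general data we read \eqref{eq:RDE_translation} as an RDE driven by the joint rough path $\mathbf{Z}=\mathbf{X}\oplus h$ over $\mathbb{R}^{2d}$, built from $\mathbf{X}$ and the canonical Young cross-integrals between $X$ and $h$, with vector fields $(V_1,\dots,V_d,V_1,\dots,V_d)$. This joint lift exists because $1/p+1/q>1$ \cite[Theorem 9.26]{frizbook}. The vector fields are linear, so unbounded. However, the solution stays in the group $G^N(\mathbb{R}^d)$, and on the truncated tensor algebra linear RDEs are well posed: they have global existence and uniqueness, and the solution map is continuous in the driver. We invoke the linear RDE theory of \cite{frizbook} (Section 10.7). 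Alternatively, one may note directly that the solution is $\mathbf{Y}_t=\mathbf{Z}$-signature pushed forward by the projection $\mathbb{R}^{2d}\to\mathbb{R}^d$, $(a,b)\mapsto a+b$.

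\emph{Approximation.} Take a sequence $(x^n)$ as in \eqref{eq:approx}. Fix a sequence of piecewise linear approximations $h^m\to h$ in $q'$-variation for some $q'>q$ with $1/p+1/q'>1$, having uniformly bounded $q$-variation. By the smooth case,
\[
S^N(x^n+h^m)=S^N(x^n)\boxplus h^m
\]
solves the ODE driven by $x^n\oplus h^m$. Two convergences now need to be checked.
\begin{itemize}
\item The joint lifts $S^N(x^n)\oplus h^m$ converge to $\mathbf{X}\oplus h$ uniformly with bounded $p'$-variation, for some $p'>p$. This follows from continuity of Young integration and interpolation.
\item The left-hand sides converge to $\mathbf{X}\boxplus h$. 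For $n\to\infty$ this is \eqref{eq:pointwise}. For $m\to\infty$ we use \eqref{eq_loc_Lip}, applied in slightly larger exponents $p',q'$ chosen so that $1/p'+1/q'>1$ still holds.
\end{itemize}
Continuity of the RDE solution map (universal limit theorem, here for linear vector fields) then shows that $\mathbf{X}\boxplus h$ solves the RDE driven by $\mathbf{X}\oplus h$. Uniqueness comes from the well-posedness of linear RDEs. The Hölder statement follows identically, since $C^{\alpha,q\text{-var}}_0\subset C^{q\text{-var}}_0$ and a solution in the variation sense is the same object.

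\emph{Main obstacle.} The delicate step is a clean formulation of the RDE with the mixed driver $\mathbf{X}$, $h$, together with uniqueness for the unbounded (linear) vector fields. I would handle both through the joint rough path and the linear RDE theory cited above, and these are exactly the places where the condition $1/p+1/q>1$ enters.
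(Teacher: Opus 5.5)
Your argument is correct, but your main route differs from the paper's. The one-line ``alternative'' you mention at the end of your second paragraph is exactly the paper's proof.

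The paper argues structurally. It uses the representation $\mathbf{X}\boxplus h=F(\mathbf{X}\oplus h)$ from \cite[Theorem 9.33]{frizbook}, where $F\colon G^{\lfloor p\rfloor}(\mathbb{R}^{2d})\to G^{\lfloor p\rfloor}(\mathbb{R}^d)$ is the homomorphism induced by $(\xi_1,\xi_2)\mapsto\xi_1+\xi_2$. It observes that the Young pairing $\mathbf{Z}=\mathbf{X}\oplus h$ solves its own signature RDE, and pushes that equation forward through $F$. Since $F$ is multiplicative, $F(z\otimes\tilde e_i)=F(z)\otimes e_i$ and $F(z\otimes\tilde e_{d+i})=F(z)\otimes e_i$. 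So the vector fields are intertwined and one lands on \eqref{eq:RDE_translation} with no limiting procedure; uniqueness is quoted from RDE theory.

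You instead check the identity for bounded-variation drivers (Chen's relation in differential form) and carry it to the limit with two continuity statements:
\begin{itemize}
\item continuity of the Young pairing and translation;
\item continuity of the It\^o--Lyons map for linear, hence unbounded, vector fields.
\end{itemize}
This costs a bit more. You need exponents $p'>p$, $q'>q$ with $1/p'+1/q'>1$, and you should also require $\lfloor p'\rfloor=\lfloor p\rfloor$ so the step of the group does not change. The double limit in $(n,m)$ has to be taken as an iterated limit (first $n$, then $m$). You also need the linear RDE theory for continuity, not only for uniqueness.

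In exchange, your proof works directly from the approximation definition \eqref{eq:pointwise} of $\boxplus$. It avoids the homomorphism representation of the translation and the naturality of RDE solutions under linear maps. Those facts, and the fact that $\mathbf{Z}$ solves its signature equation, are themselves usually proved by the same kind of approximation you carry out. So the two proofs differ mainly in where the limiting argument sits.
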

\begin{proof}
Following \cite[Theorem 9.33]{frizbook} and equation \eqref{eq:coordinates_Young}, the rough path $\mathbf{X} \boxplus h$ arises as $F(\mathbf{X} \oplus h)$, where $F\colon G^{\lfloor p \rfloor}(\mathbb{R}^{2d})\to G^{\lfloor p \rfloor}(\bR^d)$ is the unique group homomorphism such that for all $\xi\in \mathbb{R}^{2d}$, $\xi=(\xi_1,\xi_2)$, one has $F(\exp_{2d}((\xi_1,\xi_2)))=\exp_d(\xi_1+\xi_2)$. Here for every $n\geq 1$,  $ \exp_n\colon  \mathbb{R}^n\to G^{\lfloor p \rfloor}(\mathbb{R}^{n})$ is the restriction of the canonical Lie group exponential map and $\mathbf{X} \oplus h\in \mathcal{C}^{p\text{-var}}_{\mathbf{1}}$ is the Young pairing of $\mathbf{X}$ and $h$, a $\mathbb{R}^{2d}$ rough path whose projection on the first $d$ coordinates coincides with $\mathbf{X}$ and the projection on the other $d$ coordinates coincides with $h$, see \cite[Theorem 9.26]{frizbook}. Since $\mathbf{Z}:=\mathbf{X} \oplus h$ trivially satisfies the signature equation in the sense of RDE,
\begin{equation}\label{eq:rde_young_pairing}
\mathrm{d}\mathbf{Z}_t = \sum_{i=1}^{2d} \widetilde{V}_i(\mathbf{Z}_t)\, \mathrm{d}\mathbf{Z}_t^i,
\end{equation}
where $\widetilde{V}_i$ are the right-invariant vector fields on $G^N(\mathbb{R}^{2d})$ corresponding to the canonical basis $\{e_i\}$ of $\mathbb{R}^{2d}$, pushing forward \eqref{eq:rde_young_pairing} via $F$ yields exactly \eqref{eq:RDE_translation}. Uniqueness of the solution to \eqref{eq:RDE_translation} follows from the standard well‑posedness theory for rough differential equations, completing the proof. In the case of a H\"older rough path, the reasoning follows in the same way by starting from  the existence of a Young pairing in this setting \cite[Lemma 9.24,Theorem 9.26]{frizbook}.
\end{proof}

Thanks to the identification provided by the RDE \eqref{eq:RDE_translation}, we can interpret the path \( h \) in \( (\mathbf{X} \boxplus h) \) as a control. This allows us to apply the Chow--Rashevskii theorem for RDEs, which guarantees controllability of the map
\[
(\mathbf{X}, h) \in \mathcal{C}^{p\text{-var}}_{\mathbf{1}} \times C^{q\text{-var}}_0 \;\longmapsto\; (\mathbf{X} \boxplus h)_1 \in G^{\lfloor p \rfloor}(\mathbb{R}^d)\,.
\]
with surjective differential. The precise statement is as follows.

\begin{theorem}[Chow--Rashevskii theorem for translations]\label{thm_chow_transl}
Let \( \mathcal{H} \) be a Hilbert space of \( \mathbb{R}^{d} \)-valued paths such that $
\mathcal{H} \subset C^{q\text{-var}}_0$ with  $1/p + 1/q > 1$ and $C^\infty_0([0,1]; \mathbb{R}^{d}) \subset \mathcal{H}$.
For any  \( \mathbf{X} \in \mathcal{C}^{p\text{-var}}_{\mathbf{1}} \) and any \( g \in G^{\lfloor p \rfloor}(\mathbb{R}^d) \), there exists \( h \in \mathcal{H} \) such that $
(\mathbf{X} \boxplus h)_1 = g.$
Moreover, the map $h \mapsto (\mathbf{X} \boxplus h)_1$ is Fr\'echet differentiable, with gradient \( \nabla_{\mathcal{H}} (\mathbf{X} \boxplus h)_1 \), and \( h \) can be chosen to be non-singular, that is
\begin{equation*}
\langle \nabla_{\mathcal{H}} (\mathbf{X} \boxplus h)_1, \cdot \rangle \colon \mathcal{H} \to T_g G^{\lfloor p \rfloor}(\mathbb{R}^d) \quad \text{is surjective},
\end{equation*}
with  $T_g G^{\lfloor p \rfloor}(\mathbb{R}^d)$ the tangent space of $G^{\lfloor p \rfloor}(\mathbb{R}^d)$ at point $g$.  In case when $ \mathbf{X}\in \mathcal{C}^{\alpha}_{\mathbf{1}} $  and the Hilbert space  satisfies $\mathcal{H}\subset C^{\alpha,q\text{-var}}_0$ with $\alpha+ 1/q>1$ the same result holds.
\end{theorem}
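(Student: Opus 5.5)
The proof is a reduction to the Chow--Rashevskii / Hörmander-type controllability theory for rough differential equations, as developed in \cite{Gassiat24}. By the previous Proposition, $t\mapsto(\mathbf{X}\boxplus h)_t$ solves the RDE \eqref{eq:RDE_translation} driven by $\mathbf{X}$ and controlled by $h$. The vector fields $V_i(y)=y\otimes e_i$ are right-invariant on $G^{\lfloor p\rfloor}(\mathbb{R}^d)$, which is a simply connected nilpotent group. Their iterated Lie brackets therefore generate the whole Lie algebra $\mathfrak{g}^{\lfloor p\rfloor}(\mathbb{R}^d)$, and evaluated at any point $y$ they span $T_yG^{\lfloor p\rfloor}(\mathbb{R}^d)$. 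So the strong Hörmander (bracket-generating) condition holds uniformly on the whole group, and we can invoke the controllability theorem of \cite{Gassiat24}. That theorem says that for an RDE driven by a rough path plus a Cameron--Martin-type control in $\mathcal{H}$, with bracket-generating vector fields, the endpoint map $h\mapsto Y_1$ is onto, and the preimage of any point contains a point where the differential is surjective.

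The key steps are the following.

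\textbf{Differentiability.} The map $h\mapsto(\mathbf{X}\boxplus h)_1$ is Fréchet differentiable on $\mathcal{H}$. This follows from the continuity of $\mathcal{H}\hookrightarrow C^{q\text{-var}}_0$ and the smooth dependence of RDE solutions on a Young perturbation $h$. Alternatively, one can read it directly from \eqref{eq:coordinates_Young}, which is polynomial in $h$ with coefficients given by Young integrals and hence continuous multilinear in $h$. The derivative in direction $k$ is
\[
\int_0^1 \sum_i J_{t\leftarrow 1}^{-1}\,V_i(Y_t)\,\mathrm{d}k^i_t ,
\]
where $J$ is the Jacobian flow. Since the vector fields are right-invariant, $J$ is explicit: it is left multiplication.

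\textbf{Non-degeneracy at some $h_0$.} This is a Malliavin-type argument. The range of the differential equals the span of the vectors $J_{1\leftarrow t}V_i(Y_t)$ over $t$ and $i$, because $\mathcal{H}$ contains $C^\infty_0$, which is dense enough to localise $\mathrm{d}k$ near any time $t$. If this range were a proper subspace, a covector $\xi$ would annihilate all of these vectors. Expanding $t\mapsto\langle\xi,J_{1\leftarrow t}V(Y_t)\rangle\equiv 0$ and using a Norris-type lemma / true roughness argument, or simply differentiating along smooth perturbations, would force $\xi$ to annihilate all brackets, which is a contradiction. In the right-invariant setting this can be made much simpler. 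Choose $h_0$ such that the translated path is concatenated with a smooth ``wiggle'' on $[1-\varepsilon,1]$. The endpoint map then factors as $(\mathbf{X}\boxplus h_0)_{1-\varepsilon}\otimes S(\tilde h)_{1-\varepsilon,1}$, up to the rough contribution of $\mathbf{X}$ on the last interval.

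\textbf{Exact surjectivity onto $g$.} Once a nonsingular point exists, the endpoint map is a local submersion there, so its image contains a neighbourhood. Exact hitting of an arbitrary $g$ then uses the group structure. Write $(\mathbf{X}\boxplus h)_1=(\mathbf{X}\boxplus h)_{1-\varepsilon}\otimes(\mathbf{X}\boxplus h)_{1-\varepsilon,1}$. The first factor is chosen freely by classical Chow on $[0,1-\varepsilon]$: it is the unit group times translation, and any element is reached by the Chow theorem \cite[Theorem 7.28]{frizbook} for smooth $h$, after correcting for $\mathbf{X}$. A cleaner way to organise this is to note that the reachable set is invariant under left multiplication by all of $G$, which we obtain by prepending controls. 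Combined with the open-neighbourhood property, this gives the whole group. Non-singularity at the final control is preserved because surjectivity of the differential is an open condition, and prepending smooth pieces only left-multiplies the differential by a diffeomorphism.

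The Hölder version is identical, replacing $C^{q\text{-var}}_0$ by $C^{\alpha,q\text{-var}}_0$ and using Hölder Young pairing.

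\textbf{Main obstacle.} I expect the hardest step to be the existence of one nonsingular control in the presence of the rough driver $\mathbf{X}$. The rough part could in principle cancel brackets. I would handle this by citing \cite{Gassiat24} directly, verifying only that its hypotheses are met: the vector fields are smooth, right-invariant (hence with bounded-growth derivatives in exponential coordinates), and bracket-generating. Only failing that would I attempt the Norris-lemma route, which requires a roughness assumption on $\mathbf{X}$ that we want to avoid. This suggests that the group-invariance trick, which localises the control where $\mathbf{X}$'s effect is negligible, is the intended route.
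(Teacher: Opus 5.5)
Your reduction is the same as the paper's proof, which consists only of the citation \cite[Theorem 5.1]{Gassiat24} (with the H\"older case ``verbatim''). Your check that the fields $V_i$ in \eqref{eq:RDE_translation} generate the free nilpotent algebra $\mathfrak{g}^{\lfloor p\rfloor}(\mathbb{R}^d)$ is all that is needed beyond that citation.

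Your extra self-contained sketch is unnecessary, and one step of it is wrong as written. You cannot apply classical Chow on the long interval $[0,1-\varepsilon]$ ``after correcting for $\mathbf{X}$'': undoing a rough $X$ with a Young-regular $h$ is exactly the obstruction the paper stresses in its introduction. The localisation idea does work, but only on a short interval where the rescaled piece of $\mathbf{X}$ is close to $\mathbf{1}$. There the endpoint map is $C^1$-close to $k\mapsto S(k)_1$ near a regular control, and the prefix $(\mathbf{X}\boxplus h_0)_{1-\varepsilon}$ is simply absorbed into the target $(\mathbf{X}\boxplus h_0)_{1-\varepsilon}^{-1}\otimes g$.
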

\begin{proof}
The proof of the result in the $p$-variation context is  developed in \cite[Theorem 5.1]{Gassiat24}, the H\"older case follows from the same proof verbatim.
\end{proof}
By simply taking for example $\mathcal{H} = H^2_0$ the space of functions with second derivative in $L^2$ starting from $0$, which satisfies $\mathcal{H}  \subset C^1$, and introducing $\mathfrak{g}^N(\mathbb{R}^d)$ the step-$N$ free nilpotent Lie algebra associated to $G^{N}(\mathbb{R}^d)$ which is identified to $T_{\mathbf{1}} G^{N}(\mathbb{R}^d)$, we obtain the following result, which we will be a fundamental tool in the proof of \Cref{thm_main}. 
\begin{corollary} \label{cor:fundamental_step}
For any  \( \mathbf{X} \in \mathcal{C}^{p\text{-var}}_{\mathbf{1}} \) there exists \( h \in C^{1\text{-var}}_0 \) such that $
(\mathbf{X} \boxplus h)_1 = \mathbf{1}.$
Moreover, $h$ can be chosen to be non-singular, that is there exists an Hilbert space $\mathcal{H}\subset C^{1\text{-var}}$ such that $h\in \mathcal{H}$  and one has
\begin{equation}\label{eq:non-singular}
\langle \nabla_{\mathcal{H}} (\mathbf{X} \boxplus h)_1, \cdot \rangle \colon \mathcal{H} \to\mathfrak{g}^{\lfloor p \rfloor}(\mathbb{R}^d) \quad \text{is surjective}\,.
\end{equation}
In case when $ \mathbf{X}\in \mathcal{C}^{\alpha}_{\mathbf{1}} $  the path $h$  can be taken such that $h\in  C^{\alpha, 1\text{-var}}_0$.
\end{corollary}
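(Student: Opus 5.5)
We obtain Corollary~\ref{cor:fundamental_step} directly from Theorem~\ref{thm_chow_transl}, applied with the target $g=\mathbf{1}$ and a well-chosen Hilbert space. The plan has three steps.

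\emph{Choice of $\mathcal{H}$.} Take $\mathcal{H}=H^2_0$, the paths $h$ with $h(0)=0$ and $h''\in L^2([0,1];\mathbb{R}^d)$, with norm $\|h'(0)\|+\|h''\|_{L^2}$, or any equivalent Hilbert norm. Then $C^\infty_0\subset\mathcal{H}$. By Sobolev embedding, every $h\in\mathcal{H}$ is $C^1$ with $\|h'\|_\infty\lesssim\|h\|_{\mathcal{H}}$. Consequently
\[
\|h\|_{1\text{-var};[s,t]}=\int_s^t|h'_u|\,\mathrm{d}u\le \|h'\|_\infty|t-s|,
\]
so $\mathcal{H}\subset C^{1\text{-var}}_0$ and also $\mathcal{H}\subset C^{\alpha,1\text{-var}}_0$ for every $\alpha\le1$, with continuous inclusions.

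\emph{Checking the hypotheses.} In the $p$-variation case we take $q=1$. The condition $1/p+1/q>1$ then holds trivially for every $p\ge1$. In the H\"older case we take $q=1$ and use $\mathcal{H}\subset C^{\alpha,1\text{-var}}_0$; the condition $\alpha+1>1$ again holds trivially. Theorem~\ref{thm_chow_transl} therefore applies with $g=\mathbf{1}$. It gives some $h\in\mathcal{H}$ with $(\mathbf{X}\boxplus h)_1=\mathbf{1}$, and this $h$ can be chosen non-singular, i.e. $\langle\nabla_{\mathcal{H}}(\mathbf{X}\boxplus h)_1,\cdot\rangle$ maps $\mathcal{H}$ onto $T_{\mathbf{1}}G^{\lfloor p\rfloor}(\mathbb{R}^d)$.

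\emph{Identifying the tangent space.} It remains to identify $T_{\mathbf{1}}G^{\lfloor p\rfloor}(\mathbb{R}^d)$ with $\mathfrak{g}^{\lfloor p\rfloor}(\mathbb{R}^d)$, which is the standard identification of a Lie group's tangent space at the identity with its Lie algebra. This yields \eqref{eq:non-singular}, and $h\in\mathcal{H}\subset C^{1\text{-var}}_0$ (respectively $C^{\alpha,1\text{-var}}_0$) as required.

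There is no genuine obstacle here. The only point needing care is the embedding $\mathcal{H}\subset C^{\alpha,1\text{-var}}_0$ in the H\"older setting, which reduces to the bound $\|h'\|_\infty\lesssim\|h\|_{H^2}$ above.
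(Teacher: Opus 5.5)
Your proposal is correct and is essentially the paper's argument: the paper likewise takes $\mathcal{H}=H^2_0\subset C^1$, applies Theorem~\ref{thm_chow_transl} with target $g=\mathbf{1}$ and identifies $T_{\mathbf{1}}G^{\lfloor p\rfloor}(\mathbb{R}^d)$ with $\mathfrak{g}^{\lfloor p\rfloor}(\mathbb{R}^d)$. Your bound $\|h\|_{1\text{-var};[s,t]}\le\|h'\|_\infty|t-s|$ just makes explicit the embeddings into $C^{1\text{-var}}_0$ and $C^{\alpha,1\text{-var}}_0$ (with $q=1$) that the paper leaves implicit.
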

\subsection{Stability of local surjectivity under perturbations}

In addition to Corollary \ref{cor:fundamental_step}, we use a classical lemma on the stability of the local surjectivity of a differentiable function under $C^1$ perturbations in the context of $G^N(\bR^d)$-valued maps. Although this topic is covered in the standard literature on nonlinear analysis in metric spaces (see, e.g. \cite{implicit_book}), we include it for completeness and to establish a common notation.

Both $G^N(\mathbb{R}^d)$ and $\mathfrak{g}^N(\mathbb{R}^d)$ can be equipped with the norm $\|a\| := \sum_{k=1}^N \|\pi_k(a)\|^{1/k}$ given before. Moreover, $G^N(\mathbb{R}^d)$ and $\mathfrak{g}^N(\mathbb{R}^d)$ are linked by the exponential and logarithm maps
\[
\exp : \mathfrak{g}^N(\mathbb{R}^d) \to G^N(\mathbb{R}^d)\,,\quad
\log : G^N(\mathbb{R}^d) \to \mathfrak{g}^N(\mathbb{R}^d)\,,
\]
which define global diffeomorphisms. For a $C^1$ map $f:\bR^n \to G^N(\bR^d)$ we denote by $\mathrm{d}f_x$ its differential and we introduce the left logarithmic derivative
\[
\mathrm{d}^* f_x := (L_{f(x)^{-1}})_* \mathrm{d}f_x,
\quad 
\mathrm{d}^* f_x v = \left.\frac{\mathrm{d}}{\mathrm{d}t} f(x)^{-1}\otimes f(x+tv)\right|_{t=0},
\]
via the left multiplication
\[
L_{f(x)^{-1}} : g\in G^N(\mathbb R^d)\mapsto f(x)^{-1}g\in G^N(\mathbb R^d).
\]
By construction $\mathrm{d}^* f_x$ is a linear map $\mathrm{d}^* f_x\colon \mathbb{R}^n \to \mathfrak{g}^N(\mathbb{R}^d)$. For $f,g \in C^1(\mathbb{R}^n, G^N(\mathbb{R}^d))$ and $R>0$, we define
\begin{equation}\label{eq_loc_C1_norm}
d_1^R(f,g) := \sup_{x \in B(0,R)} \|f(x)-g(x)\|
+ \sup_{x \in B(0,R)} \|\mathrm{d}^* f_x - \mathrm{d}^* g_x\|_{\text{op}}\,,
\end{equation}
with $B(0,R)$ the usual Euclidean closed 	ball centered at $0$ of radius $R>0$ and $\|A\|_{\text{op}}$ the operator norm of linear maps $A\colon\mathbb{R}^n \to \mathfrak{g}^N(\mathbb{R}^d)$ where we use on $\mathbb{R}^n$ the euclidean norm (which we denote by $\|\cdot\|$ with a slight abuse of notation). Note that $d_1^R(f,g)$ is monotone in $R$. We pass then to the main local surjectivity property.

\begin{proposition}\label{inversion}
Let $f\colon \mathbb{R}^n \to G^N(\mathbb{R}^d)$ be a $C^1$ function such that
$f(0)=\mathbf{1}$ and $\mathrm{d}^* f_0=\mathrm{d}f_0$ is invertible. Then there exist $R,\varepsilon>0$ such that for every $g \in C^1(\mathbb{R}^n,G^N(\mathbb{R}^d))$ satisfying
$ d_1^R(f,g) \le \varepsilon$,
one has $\mathbf{1}\in g\big(B(0,R/2)\big)$.
\end{proposition}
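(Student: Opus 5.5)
We work in exponential coordinates and reduce the statement to a quantitative inverse function theorem on $\mathbb{R}^n$. Since $\log\colon G^N(\mathbb{R}^d)\to\mathfrak{g}^N(\mathbb{R}^d)$ is a global diffeomorphism with $\log\mathbf 1=0$, the claim $\mathbf 1\in g(B(0,R/2))$ is the same as $0\in \tilde g(B(0,R/2))$, where $\tilde g:=\log\circ g$ and $\tilde f:=\log\circ f$ are $C^1$ maps $\mathbb{R}^n\to\mathfrak{g}^N(\mathbb{R}^d)$.

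By hypothesis $\mathrm{d}\tilde f_0=\mathrm{d}f_0$ is invertible, so $n=\dim\mathfrak{g}^N(\mathbb{R}^d)$. Set $A:=\mathrm{d}\tilde f_0$ and $\lambda:=\|A^{-1}\|_{\text{op}}^{-1}$, and look for a zero of $\tilde g$ as a fixed point of
\[
T(x):=x-A^{-1}\tilde g(x).
\]
We need $R$ and $\varepsilon$ such that $T$ maps $B(0,R/2)$ into itself and is a contraction there; Banach's fixed point theorem then gives $x^*$ with $\tilde g(x^*)=0$, i.e.\ $g(x^*)=\mathbf 1$. A Brouwer-type argument would also suffice, since only existence is needed.

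Before estimating, we compare the metric $d_1^R$ with Euclidean $C^1$ closeness of $\tilde f,\tilde g$. This is the main technical step, because the $d_1^R$ quantities are not linear. The norm $\|a\|=\sum_k\|\pi_k(a)\|^{1/k}$ is inhomogeneous. Also, $\|f(x)-g(x)\|$ uses the ambient tensor-algebra difference, and the logarithmic derivatives involve left translation.

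First, restrict to the compact set $K:=f(B(0,R))$ and its closed $1$-neighbourhood $K_1$ in the tensor-algebra norm. For $\varepsilon\le 1$, $g(B(0,R))\subset K_1$. On compact sets, $\log$ and the group operations are smooth. Moreover, $\|\pi_k(\cdot)\|\le\|\cdot\|^k\le\|\cdot\|$ when $\|\cdot\|\le1$, so smallness of $\|f(x)-g(x)\|$ in this norm gives Euclidean smallness $\sup_{B(0,R)}|\tilde f-\tilde g|\le C_R\,\eta(\varepsilon)$ with $\eta(\varepsilon)\to 0$. For derivatives we use
\[
\mathrm{d}\tilde g_x=\mathrm{d}\log_{g(x)}\circ (L_{g(x)})_*\circ\mathrm{d}^*g_x .
\]
The map $h\mapsto \mathrm{d}\log_h\circ(L_h)_*$ is continuous, hence uniformly continuous on $K_1$. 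Therefore closeness of $g(x)$ to $f(x)$, together with closeness of $\mathrm{d}^*g_x$ to $\mathrm{d}^*f_x$ in operator norm, yields $\sup_{B(0,R)}\|\mathrm{d}\tilde g_x-\mathrm{d}\tilde f_x\|_{\text{op}}\le \omega_R(\varepsilon)$ with $\omega_R(\varepsilon)\to0$ as $\varepsilon\to0$. Here it matters that the operator norm on $\mathfrak g^N$ is only a continuous function equivalent up to constants on bounded sets, which is harmless for these estimates.

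Now fix the parameters in order.

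First choose $R$ so small that $\|\mathrm{d}\tilde f_x-A\|_{\text{op}}\le\lambda/4$ on $B(0,R)$, using continuity of $\mathrm{d}\tilde f$ at $0$.

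Then choose $\varepsilon$ so small that $\omega_R(\varepsilon)\le\lambda/4$. This gives $\|\mathrm{d}T_x\|_{\text{op}}\le\lambda^{-1}\|\mathrm{d}\tilde g_x-A\|_{\text{op}}\le 1/2$ on $B(0,R)$, so $T$ is $1/2$-Lipschitz there.

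Also require $C_R\eta(\varepsilon)\le\lambda R/4$. Since $\tilde f(0)=0$, this gives $|\tilde g(0)|\le\lambda R/4$, hence $|T(0)|\le R/4$. Consequently, for $|x|\le R/2$ we get $|T(x)|\le |T(0)|+\tfrac12|x|\le R/4+R/4=R/2$.

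Thus $T$ is a self-map and a contraction of $B(0,R/2)$, and its fixed point is the required preimage of $\mathbf 1$. Monotonicity of $d_1^R$ in $R$ is not needed beyond fixing $R$ first and $\varepsilon$ second.
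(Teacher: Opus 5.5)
Your argument is correct and is essentially the paper's proof: pass to the global chart $\log$, choose $R$ so that $\mathrm{d}(\log\circ f)$ stays close to $\mathrm{d}f_0$ on $B(0,R)$, and obtain the zero of $\log\circ g$ as the Banach fixed point of $T(x)=x-(\mathrm{d}f_0)^{-1}\log g(x)$. The differences are only technical. You transfer $d_1^R$-closeness to Euclidean $C^1$-closeness by uniform continuity of $h\mapsto\mathrm{d}\log_h\circ(L_h)_*$ on a compact neighbourhood, rather than by the paper's explicit $\mathrm{ad}/(e^{\mathrm{ad}}-\mathrm{id})$ formula; note that only the one-sided comparison $\sum_k\|\pi_k(a)\|\le N\|a\|$ for $\|a\|\le 1$ holds, which is all you use, so your phrase ``equivalent up to constants'' overstates it. You also run the contraction on $B(0,R/2)$ directly instead of mapping $B(0,R)$ into $B(0,R/2)$.
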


\begin{proof}
To prove the result we will use the global chart $\log$ to restate the theorem. Setting $F := \log \circ f$ and $G := \log \circ g$, the condition
$\mathbf{1} \in g(B(0,R/2))$ is equivalent to $F(0)=0$ and we search $x^* \in B(0,R/2)$ with
$G(x^*)=0$. Since $\log$ is smooth and for any $h\colon \mathbb{R}^n \to G^N(\mathbb{R}^d)$ one has the explicit formula on $H= \log \circ h$
\[dH_x=\frac{\text{ad}_{H(x)}}{e^{\text{ad}_{H(x)}}- \text{id}}\circ \mathrm{d}^* h_x=\sum_{k=0}^{N-1}\frac{B_k}{k!}\text{ad}^k_{H(x)}\circ \mathrm{d}^* h_x\]
where $\{B_k\}_{k\geq 1}$ are the Bernoulli number with the convention $B_1=- \frac{1}{2}$ and $\text{ad}_{H(x)}$ is the usual adjoint map, the condition $d_1^R(f,g)\leq \varepsilon$ implies
\[
    \sup_{x\in B(0,R)}\|G(x) - F(x)\| +    \sup_{x\in B(0,R)}\|\mathrm{d}G_x - \mathrm{d}F_x\|_{\text{op}} \leq C_{R,N}\,\varepsilon
\]
for some constant $C_{R,N} \geq 1$ depending only on $N$ and $R$. Replacing $\varepsilon$ by $C_{R,N}\varepsilon$ if necessary, we may assume
\begin{equation}\label{eq:closeness}
    \sup_{x\in B(0,R)}\|G(x) - F(x)\| +    \sup_{x\in B(0,R)}\|\mathrm{d}G_x - \mathrm{d}F_x\|_{\text{op}}\leq \varepsilon\,.
\end{equation}
Recalling that $\mathrm{d}F_0= \mathrm{d}^*f_0=\mathrm{d}f_0 $, by continuity of $\mathrm{d}F$ at $0$ we can choose $0<R \leq 1$ small enough  so that
\begin{equation}\label{eq:R}
    \sup_{x \in B(0,R)} \|\mathrm{d}F_x - \mathrm{d}f_0\|_{\text{op}}
    \leq \frac{1}{4\|\mathrm{d}f_0^{-1}\|_{\text{op}}}\,,
\end{equation}
which is possible by continuity. Set $\varepsilon := R/(4\|(\mathrm{d}f_0)^{-1}\|)$. We claim that the map
\[
    T(x) := x - ((\mathrm{d}f_0)^{-1})G(x)
\]
is a contraction on $B(0,R)$ mapping into $B(0,R/2)\subset B(0,R)$, so that its unique fixed point yields the desired zero of $G$.  Indeed, for
$x\in B(0,R)$, one has 

\[T(x) = \mathrm{d}f_0^{-1}(\mathrm{d}f_0\,x - G(x))= \mathrm{d}f_0^{-1}\left((\mathrm{d}f_0\,x - F(x)) + (F(x)-G(x))\right)\]

By the mean value inequality applied to $x\mapsto F(x)-\mathrm{d}f_0\,x$, which vanishes at $0$ together with  the property \eqref{eq:R}, one has for all $x\in B(0,R)$
\begin{equation}\label{eq:MVT}
    \|F(x) - \mathrm{d}f_0\,x\| \leq \frac{\|x\|}{4\|(\mathrm{d}f_0)^{-1}\|_{\text{op}}}\,.
\end{equation}
Then from estimates \eqref{eq:MVT} and \eqref{eq:R} we obtain
\[
    \|T(x)\| \leq \|(\mathrm{d}f_0)^{-1}\|_{\text{op}}\!\left(\frac{R}{4\|(\mathrm{d}f_0)^{-1}\|_{\text{op}}}
    + \varepsilon\right) = \frac{R}{2},
\]
For the contraction property, take  $x,y\in B(0,R) $ and $z\in B(0,R)$. From  \eqref{eq:R} and \eqref{eq:closeness} one has
\[
    \|\mathrm{d}f_0 - \mathrm{d}G_z\|
    \leq \|\mathrm{d}f_0 - \mathrm{d}F_z\| + \|\mathrm{d}F_z-\mathrm{d}G_z\|
    \leq \frac{1+R}{4\|(\mathrm{d}f_0)^{-1}\|_{\text{op}}},
\]
and therefore, writing \[T(x)-T(y) = (\mathrm{d}f_0)^{-1}\int_0^1(\mathrm{d}f_0 -
\mathrm{d}G_{y+t(x-y)})(x-y)\,\mathrm{d}t\,,\]
we obtain
\[
    \|T(x)-T(y)\| \leq \frac{1+R}{4}\,\|x-y\| \leq \frac{1}{2}\,\|x-y\|,
\]
where the last inequality uses $R\leq 1$. By the Banach fixed point theorem, $T$ has a
unique fixed point $x^*\in B(0,R) $, which by the self-mapping property satisfies $x^*\in B(0,R/2)$ and $G(x^*)=0$, i.e.\ $g(x^*)=\mathbf{1}$.
Hence $\mathbf{1}\in g(B(0,R/2))$.
\end{proof}

\section{Convergence of translations} \label{sec:3}

We now proceed to the proof of Theorem~\ref{thm_main}. The sequence of paths appearing in~\eqref{eq:main_conv} will be constructed by concatenating several segments, each obtained by applying Corollary~\ref{cor:fundamental_step} on suitably chosen subintervals. To make this construction rigorous, we first establish suitable a priori bounds for a single control given by Corollary~\ref{cor:fundamental_step}. \subsection{Apriori estimates on the control}

We first introduce a quantitative indication over all possible controls in Corollary~\ref{cor:fundamental_step}.
\begin{definition}
For any \( \mathbf{X} \in \mathcal{C}^{p\text{-var}}_{\mathbf{1}} \) we set
$$
C(\mathbf{X}) = \inf\{\| h \|_{1\text{-var}} \colon (\mathbf{X} \boxplus h)_{1} = \mathbf{1}\}.
$$
Moreover in case $\mathbf{X} \in \mathcal{C}^{\alpha} $ we replace $\| h \|_{1\text{-var}}$ with $\| h \|_{\alpha;1\text{-var}}$ to obtain $C_{\alpha}(\mathbf{X})$.
\end{definition}

Thanks to the previous section $C(\mathbf{X})<\infty$ for any \( \mathbf{X} \in \mathcal{C}^{p\text{-var}}_{\mathbf{1}} \). Moreover, this quantity can be uniformly bounded in the strong rough path norm.

\begin{lemma}\label{finitude et linéarité}
	One has the  finiteness bound for any $p = \frac{1}{\alpha} > 1$ :
	\begin{equation}\label{finitude}
		\sup_{\|\mathbf{X}\|_{p\text{-var}}\leq  1}  C(\mathbf X) \leq \sup_{\|\mathbf{X}\|_{\alpha}\leq  1} C_{\alpha}(\mathbf X)<\infty\,.
	\end{equation}
\end{lemma}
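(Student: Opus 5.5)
The first inequality in \eqref{finitude} I would get by reparametrisation. The finiteness I would get by a compactness argument built on Corollary~\ref{cor:fundamental_step} and Proposition~\ref{inversion}.

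\emph{Step 1 (comparison).} Let $\|\mathbf X\|_{p\text{-var}}\le 1$ and set $\omega(t)=\|\mathbf X\|^p_{p\text{-var};[0,t]}\le 1$. To get strict monotonicity, use $\tau(t)=(\omega(t)+\eta t)/(1+\eta)$, which is a homeomorphism of $[0,1]$. Then $\tilde{\mathbf X}:=\mathbf X\circ\tau^{-1}$ satisfies $|\tilde{\mathbf X}_{s,t}|\le (1+\eta)^{1/p}|t-s|^{\alpha}$. Using $\delta_\lambda(\mathbf X\boxplus h)=\delta_\lambda\mathbf X\boxplus \lambda h$ with $\lambda=(1+\eta)^{-1/p}$, we bring $\tilde{\mathbf X}$ into the $\alpha$-H\"older unit ball.

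Young translation commutes with reparametrisation, $(\mathbf X\circ\tau^{-1})\boxplus(h\circ\tau^{-1})=(\mathbf X\boxplus h)\circ\tau^{-1}$, as one sees from \eqref{eq:pointwise}. Moreover the $1$-variation norm is invariant under time change and $\|h\|_{1\text{-var}}\le\|h\|_{\alpha;1\text{-var}}$. Together these give $C(\mathbf X)\le\lambda^{-1}C_\alpha(\delta_\lambda\tilde{\mathbf X})$, and letting $\eta\to0$ yields the first inequality.

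\emph{Step 2 (local uniform bound).} Fix $\mathbf X$ with $\|\mathbf X\|_\alpha\le 1$. Corollary~\ref{cor:fundamental_step} gives a non-singular $h_{\mathbf X}\in C^{\alpha,1\text{-var}}_0$ with $(\mathbf X\boxplus h_{\mathbf X})_1=\mathbf 1$. By surjectivity \eqref{eq:non-singular}, we can pick $k_1,\dots,k_n\in\mathcal H$, with $n=\dim\mathfrak g^{\lfloor p\rfloor}(\mathbb R^d)$, whose images form a basis. Define
\[
f_{\mathbf X'}(x):=\Big(\mathbf X'\boxplus\big(h_{\mathbf X}+\textstyle\sum_i x_ik_i\big)\Big)_1,\qquad x\in\mathbb R^n.
\]
Then $f_{\mathbf X}(0)=\mathbf 1$ and $\mathrm d f_{\mathbf X,0}$ is invertible.

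I claim that $d_1^R(f_{\mathbf X},f_{\mathbf X'})\to0$ as $\mathbf X'\to\mathbf X$ in a weaker topology, namely $\alpha'$-H\"older with $\alpha'<\alpha$ and $\alpha'+1>1$. For the sup term this follows from \eqref{eq_loc_Lip}. For the derivative term, I would use that $\mathrm d^*f$ is obtained from the linearisation of the RDE \eqref{eq:RDE_translation}, whose solution depends continuously on the driving rough path, uniformly for $x$ in a compact ball.

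Proposition~\ref{inversion} then produces, for all $\mathbf X'$ in a neighbourhood $U_{\mathbf X}$, a point $x^*\in B(0,R/2)$ with $f_{\mathbf X'}(x^*)=\mathbf 1$. Hence
\[
C_\alpha(\mathbf X')\le \|h_{\mathbf X}\|_{\alpha;1\text{-var}}+R\sum_i\|k_i\|_{\alpha;1\text{-var}}
\]
on $U_{\mathbf X}$.

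\emph{Step 3 (compactness).} The $\alpha$-H\"older unit ball is compact in the $\alpha'$-H\"older rough path topology by Arzel\`a--Ascoli and interpolation. So finitely many $U_{\mathbf X}$ cover it, and the supremum is finite.

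One technicality concerns the case where $1/\alpha$ is an integer. Then $\lfloor1/\alpha'\rfloor>\lfloor 1/\alpha\rfloor$, and I would work instead with the Lyons extension to level $\lfloor 1/\alpha'\rfloor$, which is continuous and compatible with translation. Alternatively, I would use $p'$-variation with $p'>p$ and $\lfloor p'\rfloor=\lfloor p\rfloor$ when that is possible.

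\emph{Main obstacle.} The hardest step is the $C^1$-continuity in Step 2: the left-logarithmic derivative of $h\mapsto(\mathbf X\boxplus h)_1$ must depend continuously on $\mathbf X$ in the weaker topology, uniformly over the ball $B(0,R)$. I would derive this from the Fr\'echet differentiability in Theorem~\ref{thm_chow_transl} together with standard continuity of Jacobian flows of RDEs with respect to the driver.
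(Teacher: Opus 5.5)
Your proposal is correct and follows essentially the paper's route. You use reparametrisation for the first inequality; your $\eta$-perturbed clock plus dilation just makes explicit the constant the paper leaves implicit. For finiteness you take a non-singular control from Corollary~\ref{cor:fundamental_step}, show $C^1$-closeness in $\rho_{\alpha'}$ and feed it into Proposition~\ref{inversion}, then use compactness of the $\alpha$-H\"older unit ball. Your finite subcover is equivalent to the paper's contradiction argument along a convergent subsequence.

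The only slip is your ``technicality'' about $1/\alpha \in \mathbb{N}$. Since $\alpha'<\alpha$ means $1/\alpha'>1/\alpha$, you can always choose $1/\alpha'\in(1/\alpha,\lfloor 1/\alpha\rfloor+1)$, which gives $\lfloor 1/\alpha'\rfloor=\lfloor 1/\alpha\rfloor$, so no Lyons extension is needed.
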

\begin{proof}
The first inequality follows from the fact that for any continuous $p$-variation path, there exists a reparametrization of $[0,1]$ under which it becomes $1 /p$-H\"older (see \cite[Proposition 5.14]{frizbook}). It therefore suffices to prove the finiteness of the second supremum.


We prove the result by contradiction. Suppose that
\[
\sup_{\|\mathbf{X}\|_{\alpha}\leq 1} C_{\alpha}(\mathbf{X}) = \infty.
\]
Therefore we can construct a sequence $(\fX^m) \subset \cC^{{\alpha}}$ such that $C_\alpha(\fX^m) \to \infty$ and the property $\|\fX^m\|_{\alpha} \leq 1$ for all $m$. By the standard compactness criterion in \cite[Proposition 8.17]{frizbook}, up to extraction of a subsequence, $\fX^m$ converges to some $\fX \in \cC^{\alpha}$ in the $\rho_{\alpha'}$ metric for every $\alpha' < \alpha$ with $\|\fX\|_\alpha \leq 1$.

 Then we apply Corollary \ref{cor:fundamental_step} to  $\fX$ and consider  $h\in \mathcal{H}\subset C^{\alpha;1\text{-var}}_0$  such that $(\mathbf X\boxplus h)_{1}=\mathbf 1$. In addition to this, we can also exploit the property \eqref{eq:non-singular} and once fixed a basis of $e_1\,,\ldots \,,e_n$ of $\mathfrak{g}^{\lfloor\alpha^{-1} \rfloor}(\mathbb{R}^d) $ we fix $u_1\,,\ldots \,,u_n\in \mathcal{H}\subset C^{\alpha;1\text{-var}}_0$ such that for all $i=1\,, \ldots\,, n$
\begin{equation}\label{eq:surjectivity}
\langle \nabla_{\mathcal{H}} (\mathbf{X} \boxplus h)_1, u_i \rangle=e_i\,.
\end{equation}
These paths will allow to control the paths in the definition of $C_\alpha(\fX^m)$ in terms of the paths in the definition  of $C_\alpha(\fX)$.

 We define then for any $\fY\in  \cC^{\alpha}$ with $\|\fY\|_{\alpha} \leq 1$ the map $\Phi_\fY\colon \mathbb R^n\to G^{\lfloor \alpha^{-1} \rfloor}(\mathbb{R}^d) $ given by 
\[
	\Phi_\fY(x)=(\fY\boxplus (h+x\cdot u))_{1}\,,
\]
where $x\cdot u=x_1u_1+\dots x_n u_n$. By construction and using again Corollary \ref{cor:fundamental_step}, $\Phi_\fX$ is a $C^1$ map that satisfies $\Phi_\fX(0)=\mathbf{1}$ and $\mathrm{d}(\Phi_\fX)_0$ is invertible because from \eqref{eq:surjectivity} it coincides with the identity in the basis given by $e_1\,,\ldots \,,e_n$.  In addition to the properties of $\Phi_\fX$, we can show that for any $R>0$ and $\alpha' < \alpha$ such that $\lfloor \frac{1}{\alpha'} \rfloor=\lfloor \frac{1}{\alpha}  \rfloor $ there exists a constant $C_{R,\alpha'}>0$ such that for any $\fY \in \cC^{{\alpha}}_{\mathbf{1}}$ with $\|\fY\|_{\alpha} \le 1$, one has
\begin{equation}\label{eq:control_d_1}
d_1^R(\Phi_\fY,\Phi_\fX) \le C_{R,\alpha'} \, \rho_{\alpha'}(\fY,\fX),
\end{equation}
where $d_1^R$ is given in \eqref{eq_loc_C1_norm}. To prove \eqref{eq:control_d_1} one uses the local Lipschitz property in 
\eqref{eq_loc_Lip}, which implies
\begin{equation}\label{first_bound}
\sup_{x \in B(0,R)} \|\Phi_\fY(x)-\Phi_\fX(x)\|\lesssim \rho_{\alpha'}(\fY,\fX)
\end{equation}
where $\lesssim$ stands for inequality up to a constant depending on $R$ and $\alpha'<\alpha$ such that $\lfloor \frac{1}{\alpha'} \rfloor=\lfloor \frac{1}{\alpha}  \rfloor $ .

Concerning the diagonal derivative, we derive an explicit representation for \( d^*(\Phi_\fY)_x v \). Consider the RDE representation of \( \Phi_\fY(x) \) in \eqref{eq:RDE_translation}. Differentiating with respect to the parameter \(x\) in direction \(v \in \mathbb{R}^n\), which enters through the drift term \(x \cdot u\), and using differentiability of the It\^{o}--Lyons map with respect to such parameter perturbations (see \cite[Theorem 11.6]{frizbook}), we obtain $d^* (\Phi_\fY)_x v=J_1^x$, where \(J^x\) solves the linear \(\mathfrak{g}^{\lfloor \frac{1}{\alpha}  \rfloor}(\mathbb{R}^d)\)-valued RDE
\begin{equation}\label{Lie_RDE}
\mathrm{d}J_t^x = \sum_{i=1}^d  [J_t^x, \mathrm{d}\mathbf{Y}^i_t]
+  \sum_{i=1}^d [J_t^x, \mathrm{d}h^i_t + x\cdot \mathrm{d}u_{t}^i]
+ \sum_{i=1}^d v \cdot \mathrm{d}u_{t}^i,
\qquad J_0^x = 0.
\end{equation}
This RDE admits an explicit solution in terms of the adjoint representation of the group-valued path. Setting \(\fY_t^x := (\fY \boxplus h + x\cdot u)_t\), one verifies by direct computation that, for \(0 \le s \le t \le 1\), the map
\[
(s,t) \mapsto \mathrm{Ad}_{\mathbf{Y}_t^x \otimes (\mathbf{Y}_s^x)^{-1}}
\]
coincides with the flow of the homogeneous linear RDE
\begin{equation}\label{Lie_RDE_2}
\mathrm{d} L_t^x = \sum_{i=1}^d  [L_t^x, \mathrm{d}\mathbf{Y}^i_t]
+  \sum_{i=1}^d [L_t^x, \mathrm{d}h^i_t + x\cdot \mathrm{d}u_{t}^i],
\qquad L_0 = \mathrm{Id}_{\mathfrak{g}}.
\end{equation}
Since this is a linear RDE, its solution defines a two-parameter flow, and the adjoint action above provides its fundamental solution.

By the variation of constants formula, it follows that
\[
J_t^x = \int_0^t \mathrm{Ad}_{\mathbf{Y}_t^x \otimes (\mathbf{Y}_s^x)^{-1}} \bigl( v \cdot \mathrm{d}u_s \bigr),
\]
where the integral is well-defined as a Young integral since \(u\) has finite \(1\)-variation.

By simply applying Young inequality one has 
\begin{align*}
\|\mathrm{d}^*(\Phi_{\mathbf{X}})_x - \mathrm{d}^*(\Phi_{\mathbf{Y}})_x\|_{\mathrm{op}}&= \sup_{\|v\|\leq 1}\| \int_0^1\left(\mathrm{Ad}_{\mathbf{Y}_1^x\otimes (\mathbf{Y}_s^x)^{-1}} -\mathrm{Ad}_{\mathbf{X}_1^x\otimes (\mathbf{X}_s^x)^{-1}}\right) (v\cdot \mathrm{d}u_{s})\|\\&
\le \sup_{s\in[0,1]} \|\mathrm{Ad}_{\mathbf{Y}_1^x\otimes (\mathbf{Y}_s^x)^{-1}} -\mathrm{Ad}_{\mathbf{X}_1^x\otimes (\mathbf{X}_s^x)^{-1}}\|_{\mathrm{op}(\mathfrak{g}^{\lfloor \alpha^{-1} \rfloor}(\mathbb{R}^d))}
\sum_{k=1}^n \|u_k\|_{1\text{-var}}.
\end{align*}
Finally, observe that the adjoint process solves a linear RDE of the form \eqref{Lie_RDE_2}. Hence, by local Lipschitz continuity of the It\^{o}--Lyons map for linear RDEs \cite[Theorem 10.26]{frizbook}, applied uniformly in \((s,t)\), together with the stability estimate \eqref{eq_loc_Lip} for the translation map, we obtain
\[
\sup_{x\in B(0,R)}\sup_{s\in[0,1]} \|\mathrm{Ad}_{\mathbf{Y}_1^x\otimes (\mathbf{Y}_s^x)^{-1}} -\mathrm{Ad}_{\mathbf{X}_1^x\otimes (\mathbf{X}_s^x)^{-1}}\|_{\mathrm{op}(\mathfrak{g}^{\lfloor \alpha^{-1} \rfloor}(\mathbb{R}^d))}
\lesssim   \, \rho_{\alpha'}(\mathbf{Y}^x,\mathbf{X}^x)\lesssim \rho_{\alpha'}(\mathbf{X},\mathbf{Y})\,.
\]
Combining with the previous estimate yields
\begin{equation}\label{second_bound}
\sup_{x\in B(0,R)} 
\|\mathrm{d}^*(\Phi_{\mathbf{Y}})_x - \mathrm{d}^*(\Phi_{\mathbf{X}})_x\|_{\mathrm{op}}
\lesssim  C_R \, \rho_{\alpha'}(\mathbf{X},\mathbf{Y}).
\end{equation}
	and we obtain \eqref{eq:control_d_1}.	In this condition we are able to apply Proposition \ref{inversion}, which gives for our context the existence of $R,\eta >0$ such that if $\rho_{\alpha'}(\fY,\fX) \leq \eta $ then  there exists  $x\in B(0,R/2)$ such that $(\fY\boxplus h+x\cdot u)_{1}=\mathbf{1}\,$. By simply taking $m$ big enough so that $\rho_{\alpha'}(\fX^m,\fX)\leq \eta$ there exists $x^m\in B(0,R/2)$ such that $(\mathbf X^m\boxplus h+x^m\cdot u)_{1}=\mathbf 1$. Therefore
\[C_{\alpha}(\mathbf X^m)\leq \|h+x^m\cdot u\|_{\alpha,1\text{-var}}\leq n\frac{R}{2}\max_{i=1\,,\ldots \,, n} \|u_i\|_{\alpha,1\text{-var}}+\|h\|_{\alpha,1\text{-var}}\,.\]
So $C_{\alpha}(\mathbf X^m)$ is bounded and we provide a contradiction. 
\end{proof}
	From the boundedness property established in Lemma \ref{finitude}, we now derive a linear a priori control of \( C(\mathbf{X}) \). In particular, homogeneity together with boundedness immediately imply continuity of the function \(\mathbf{X}\mapsto C(\mathbf{X})\). 

\begin{proposition}\label{prop_apriori}
Let \( \mathbf{X} \in \mathcal{C}^{p\text{-var}}_{\mathbf{1}}\). Then for any \( r>0 \),
\begin{equation}\label{eq_homogeneity}
C(\delta_r \mathbf{X}) = r\, C(\mathbf{X})\,.
\end{equation}
Moreover, there exists a constant \( M>0 \) such that for all \( \mathbf{X} \in \mathcal{C}^{p\text{-var}} \),
\begin{equation}\label{eq_apriori}
C(\mathbf{X}) \le M \|\mathbf{X}\|_{p\text{-var}}\,.
\end{equation}
In case $\mathbf{X} \in \mathcal{C}^{\alpha}_{\mathbf{1}}$ the same result holds with $C_{\alpha}(\mathbf{X})$.
\end{proposition}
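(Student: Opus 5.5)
The plan is to derive the homogeneity identity \eqref{eq_homogeneity} from a scaling covariance of Young translation, and then obtain \eqref{eq_apriori} from homogeneity together with Lemma~\ref{finitude et linéarité}.

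\emph{Step 1: scaling covariance.} First I establish that for every $r>0$, $\mathbf{X}\in\mathcal{C}^{p\text{-var}}_{\mathbf{1}}$ and $h\in C^{1\text{-var}}_0$,
\[
\delta_r(\mathbf{X}\boxplus h)=(\delta_r\mathbf{X})\boxplus (rh).
\]
For smooth $\mathbf{X}=S^{\lfloor p\rfloor}(x)$ this is immediate from \eqref{eq:pointwise}. Indeed, $S^{\lfloor p\rfloor}(rx+rh)=\delta_r S^{\lfloor p\rfloor}(x+h)$, because the $k$-th iterated integral is $k$-homogeneous. For a general $\mathbf{X}$, pick $(x^n)$ as in \eqref{eq:approx}. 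Then $rx^n$ satisfies \eqref{eq:approx} for $\delta_r\mathbf{X}$, since $\|\delta_r \mathbf{Z}\|_{p\text{-var}}=r\|\mathbf{Z}\|_{p\text{-var}}$ and $\delta_r$ is continuous. Passing to the pointwise limit in \eqref{eq:pointwise}, and using continuity of $\delta_r$ on $G^{\lfloor p\rfloor}(\mathbb{R}^d)$, gives the identity. Alternatively, one can read it directly off the explicit formula \eqref{eq:coordinates_Young}: every term at level $k$ scales by $r^k$.

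\emph{Step 2: homogeneity \eqref{eq_homogeneity}.} Since $\delta_r$ is a group automorphism, $\delta_r g=\mathbf{1}$ if and only if $g=\mathbf{1}$. Hence $(\mathbf{X}\boxplus h)_1=\mathbf{1}$ if and only if $(\delta_r\mathbf{X}\boxplus rh)_1=\mathbf{1}$. The map $h\mapsto rh$ is therefore a bijection between the admissible sets in the definitions of $C(\mathbf{X})$ and $C(\delta_r\mathbf{X})$. It multiplies $\|\cdot\|_{1\text{-var}}$ by $r$, and likewise $\|\cdot\|_{\alpha,1\text{-var}}$, since that norm is also linear in $h$. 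Taking infima gives $C(\delta_r\mathbf{X})=rC(\mathbf{X})$, and similarly $C_\alpha(\delta_r\mathbf{X})=rC_\alpha(\mathbf{X})$.

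\emph{Step 3: the linear bound \eqref{eq_apriori}.} If $\|\mathbf{X}\|_{p\text{-var}}=0$, then $\mathbf{X}\equiv\mathbf{1}$, and $h=0$ is admissible, so $C(\mathbf{X})=0$. Otherwise set $r=\|\mathbf{X}\|_{p\text{-var}}$. Because the Carnot--Carath\'eodory norm is homogeneous, $\|\delta_{1/r}\mathbf{X}\|_{p\text{-var}}=1$. Homogeneity and Lemma~\ref{finitude et linéarité} then give
\[
C(\mathbf{X})=r\,C(\delta_{1/r}\mathbf{X})\le M r,\qquad M:=\sup_{\|\mathbf{Y}\|_{p\text{-var}}\le 1}C(\mathbf{Y})<\infty .
\]
The Hölder case is identical with $\|\cdot\|_\alpha$, $C_\alpha$ and the second supremum in \eqref{finitude}.

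\emph{Main obstacle.} The only genuinely non-formal point is Step~1 for non-smooth $\mathbf{X}$, that is, justifying that dilation commutes with the limiting procedure defining $\boxplus$. I would handle it through the pointwise characterization \eqref{eq:pointwise} as above. A second option is to use uniqueness for the RDE \eqref{eq:RDE_translation}: $\delta_r$ maps right-invariant vector fields to $r$ times themselves, so $\delta_r\mathbf{Y}$ solves the RDE driven by $(\delta_r\mathbf{X}, rh)$.
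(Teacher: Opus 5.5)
Your proposal is correct and follows essentially the same route as the paper. The paper likewise obtains $(\delta_r\mathbf{X}\boxplus rh)_1=\delta_r(\mathbf{X}\boxplus h)_1$ from the pointwise approximation \eqref{eq:pointwise} applied to the sequence $rx^n$, then deduces homogeneity (via $\varepsilon$-minimizers and the symmetric argument with $r^{-1}$, where you use the equivalent bijection $h\mapsto rh$ between admissible sets), and finally gets \eqref{eq_apriori} by rescaling to unit norm and invoking the finiteness bound \eqref{finitude}; your explicit treatment of the case $\|\mathbf{X}\|_{p\text{-var}}=0$ fills a small point the paper leaves implicit.
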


\begin{proof}
Thanks to Lemma \ref{finitude}, to prove \eqref{eq_apriori} is sufficient to show \eqref{eq_homogeneity}. Indeed supposing \eqref{eq_homogeneity} true, for any \( \mathbf{X}  \) such that $\|\mathbf{X}\|_{p\text{-var}} \neq 0$ one has
\[
C(\mathbf{X})
= \|\mathbf{X}\|_{p\text{-var}} \,
C\!\left(\delta_{\|\mathbf{X}\|_{p\text{-var}}^{-1}} \mathbf{X}\right),
\]
and since \( \|\delta_{\|\mathbf{X}\|_{p\text{-var}}^{-1}} \mathbf{X}\|_{p\text{-var}} = 1 \), the result follows from Lemma \ref{finitude}. To prove \eqref{eq_homogeneity}, let \( \mathbf{X} \in \mathcal{C}^{p\text{-var}}\) and $r>0$. we consider  \( h \) be an \(\varepsilon\)-minimizer for \(C(\mathbf{X})\), that is
\[
(\mathbf{X} \boxplus h)_1 = \mathbf{1}
\quad \text{and} \quad
\|h\|_{1\text{-var}} \le (1+\varepsilon) C(\mathbf{X})\,.
\]
We consider then $(x^n)\subset C^{1\text{-var}}$ a sequence of path approximating pointwise $\mathbf{X}$ from \eqref{eq:pointwise}. Using compatibility of dilation with the signature and translation, we compute
\begin{align*}
(\delta_r \mathbf{X} \boxplus r h)_1
&= \lim_{n\to\infty} (\delta_r S_N(x^n) \boxplus r h)_1= \lim_{n\to\infty} (S_N(rx^n) \boxplus r h)_1 \\
&= \lim_{n\to\infty} S_N\bigl(r(x^n + h)\bigr)_1 = \lim_{n\to\infty} \delta_r S_N(x^n + h)_1 = \delta_r (\mathbf{X} \boxplus h)_1= \mathbf{1}\,.
\end{align*}
Thus \( r h \) is admissible for \( \delta_r \mathbf{X} \), and
\[
C(\delta_r \mathbf{X}) \le \|r h\|_{1\text{-var}} \le r(1+\varepsilon) C(\mathbf{X}).
\]
Letting \(\varepsilon \to 0\) yields \( C(\delta_r \mathbf{X}) \le r C(\mathbf{X}) \). Applying the same argument to $\delta_r \mathbf{X}$ with \(r^{-1}\) instead of \(r\) gives the reverse inequality, and hence \eqref{eq_homogeneity}. The H\"older case follows verbatim.
\end{proof}
\subsection{Proof of the main theorem}
We can then provide a rigorous statement and a proof for Theorem \ref{thm_main}. Remarkably, it is sufficient to show precisely when $\mathbf{X}$ is generic and $\mathbf{Y}=\mathbf 1$ by simply patching together Theorem~\ref{thm_chow_transl} over a dyadic partition  of $[0,1]$.

\begin{theorem}\label{thm_main_rigorous}
	 For any \( \mathbf{X} \in \mathcal{C}^{p\text{-var}}_{\mathbf{1}} \)  there exists  a sequence of paths $(h^n)\subset  C^{\infty}_0$ such that $(\mathbf X\boxplus h^n)$ converges uniformly to $\mathbf{1}$ and for any $p'>p$ one has
	 \begin{equation}\label{eq:weak_conv}
\rho_{p'\text{-var}}((\mathbf X\boxplus h^n),\mathbf 1)\to 0 \,.
	\end{equation}
	Moreover if $\mathbf{X} \in \mathcal{C}^{\alpha}_{\mathbf{1}}$ the same result holds with the metric $\rho_{\alpha'} $ for any $\alpha'<\alpha$.
\end{theorem}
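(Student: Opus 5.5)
Fix $n$ and the uniform partition $t_k=k2^{-n}$, $k=0,\dots,2^n$. On each $[t_k,t_{k+1}]$ consider the rescaled rough path $\mathbf X^{(k)}_u:=\mathbf X_{t_k,t_k+u2^{-n}}$, $u\in[0,1]$, which lies in $\mathcal C^{p\text{-var}}_{\mathbf 1}$ with $\|\mathbf X^{(k)}\|_{p\text{-var}}=\|\mathbf X\|_{p\text{-var};[t_k,t_{k+1}]}$. By Corollary~\ref{cor:fundamental_step} and the definition of $C(\cdot)$, choose $g^{(k)}\in C^{1\text{-var}}_0$ with $(\mathbf X^{(k)}\boxplus g^{(k)})_1=\mathbf 1$ and $\|g^{(k)}\|_{1\text{-var}}\le 2C(\mathbf X^{(k)})\le 2M\omega(t_k,t_{k+1})^{1/p}$, by Proposition~\ref{prop_apriori}, where $\omega(s,t):=\|\mathbf X\|^p_{p\text{-var};[s,t]}$ is the control. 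Concatenate: set $\tilde h^n_t := \sum_{j<k} g^{(j)}_1 + g^{(k)}((t-t_k)2^n)$ for $t\in[t_k,t_{k+1}]$. Since reparametrisation commutes with translation (clear from \eqref{eq:pointwise}) and Young translation is local in time, Chen's relation gives $(\mathbf X\boxplus\tilde h^n)_{t_k,t_{k+1}}=\mathbf 1$ for every $k$, so $\mathbf X\boxplus\tilde h^n$ equals $\mathbf 1$ at every dyadic point of level $n$.

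Next bound the size within each cell. On $[s,t]\subset[t_k,t_{k+1}]$, the standard estimate for Young translation (local Lipschitz bound \eqref{eq_loc_Lip}, or directly from \eqref{eq:coordinates_Young} and Young's inequality) gives $|(\mathbf X\boxplus\tilde h^n)_{s,t}|\lesssim \|\mathbf X\|_{p\text{-var};[s,t]}+\|\tilde h^n\|_{1\text{-var};[s,t]}$, which is scale invariant by dilation homogeneity. Since $\|\tilde h^n\|_{1\text{-var};[t_k,t_{k+1}]}\lesssim \omega(t_k,t_{k+1})^{1/p}$, the translated path $\mathbf Z^n:=\mathbf X\boxplus\tilde h^n$ satisfies $\|\mathbf Z^n\|_{p\text{-var};[t_k,t_{k+1}]}\lesssim\omega(t_k,t_{k+1})^{1/p}$ uniformly in $n$. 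Superadditivity then gives $\sup_n\|\mathbf Z^n\|_{p\text{-var}}<\infty$. Moreover $\sup_t|\mathbf Z^n_t|\le\max_k\sup_{t\in[t_k,t_{k+1}]}|\mathbf Z^n_{t_k,t}|\lesssim \max_k\omega(t_k,t_{k+1})^{1/p}\to0$ by continuity of $\omega$, so $\mathbf Z^n\to\mathbf 1$ uniformly. Uniform convergence together with a uniform $p$-variation bound yields, by interpolation \cite[Lemma 8.16]{frizbook}, convergence in $\rho_{p'\text{-var}}$ for every $p'>p$. In the Hölder case one uses $C_\alpha$ and the bound $\|g^{(k)}\|_{\alpha,1\text{-var}}\lesssim\|\mathbf X\|_\alpha$ after rescaling; the relevant homogeneity is $2^{-n\alpha}$. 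This gives $\|\mathbf Z^n\|_{\alpha}$ bounded, with sup-norm of order $2^{-n\alpha}$, and interpolation gives $\rho_{\alpha'}$ convergence. Checking the $(\alpha,1\text{-var})$ norm across cells requires a short case split over cells. Intervals spanning several cells are handled because $\mathbf Z^n_{t_k,t_j}=\mathbf 1$, so only the two boundary pieces contribute.

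Finally replace $\tilde h^n$ by smooth paths. $\tilde h^n\in C^{1\text{-var}}$ (respectively $C^{\alpha,1\text{-var}}$, and in fact in $\mathcal H\subset C^1$ piecewise). Mollify to $h^n\in C^\infty_0$ with $\|h^n-\tilde h^n\|_{q\text{-var}}$ small for some $q$ slightly above $1$ with $1/p+1/q>1$ (respectively in $C^{0;\alpha,q\text{-var}}$). Then \eqref{eq_loc_Lip} keeps $\rho(\mathbf X\boxplus h^n,\mathbf Z^n)\le 1/n$ while preserving the bounds, and a diagonal argument concludes.

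The main obstacle is the second step. We need a uniform-in-$n$ $p$-variation (or Hölder) bound on $\mathbf Z^n$, which rests entirely on the linear a priori estimate of Proposition~\ref{prop_apriori}; we also need a correct treatment of the rescaling and concatenation in the Hölder/$(\alpha,1\text{-var})$ norm. Only a bound in this norm is required, not convergence, which is why the statement loses $\alpha\to\alpha'$.
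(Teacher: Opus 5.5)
Your proposal is correct and follows essentially the paper's proof: rescaled near-optimal controls from Corollary~\ref{cor:fundamental_step} with the linear bound of Proposition~\ref{prop_apriori} on each dyadic cell, concatenation so that $\mathbf Z^n_{t_k,t_j}=\mathbf 1$, cell-wise bounds, and the crossing argument to pass to a global $p$-variation (or H\"older) bound, followed by smoothing. Two clarifications: the crossing step you state in the H\"older paragraph is also what gives $\sup_n\|\mathbf Z^n\|_{p\text{-var}}<\infty$, since superadditivity of $\omega$ alone only bounds $\sum_k\|\mathbf Z^n\|^p_{p\text{-var};[t_k,t_{k+1}]}$; and no global bound on $\tilde h^n$ should be attempted, because its $1$-variation (and $(\alpha,1\text{-var})$ norm) on $[0,1]$ typically blows up with $n$. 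The one mild difference is the ending, where you get uniform convergence directly from $\max_k\omega(t_k,t_{k+1})\to 0$ and conclude by interpolation, which is a slightly cleaner substitute for the paper's compactness-plus-identification-of-the-limit step.
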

\begin{proof}
 For any fixed $\mathbf X \in \mathcal{C}^{p\text{-var}}_{\mathbf{1}} $   and $s,t\in[0,1]$, $s<t$ we localise translation on an interval $[s,t]$. Starting from the rescaled rough path $\mathbf{Y}^{s,t}_u=\mathbf X_{s,s+ (t-s)u}$ for $u\in [0,1]$, we apply Corollary \ref{cor:fundamental_step} and Proposition \ref{prop_apriori} to $\mathbf{Y}^{s,t}$ obtaining a path $g^{s,t}\in C^{1\text{-var}}_0 $ such that
\[
(\mathbf{Y}^{s,t}\boxplus g^{s,t})_1= \mathbf{1}\,, \quad \| g^{s,t}\|_{1\text{-var}}\leq 2 M \|\mathbf{X}\|_{p\text{-var}, [s,t]}\,.
\]
Defining then  
\[h(u)= g^{s,t}\left(\frac{u-s}{t-s}\right)\,, \quad u\in [s,t]\]
and constant elsewhere, we can easily obtain from \eqref{eq:coordinates_Young} and the invariance of the $1\text{-var}$ norm that $ h\in C^{1\text{-var}}([s,t]) $ and
\begin{equation}\label{eq_estimates} (\mathbf{X}\boxplus h)_{s,t}= (\mathbf{Y}^{s,t}\boxplus g^{s,t})_1= \mathbf{1}\,, \quad \| h\|_{1\text{-var}, [s,t]}=\| g^{s,t}\|_{1\text{-var}}\leq 2 M \|\mathbf{X}\|_{p\text{-var}, [s,t]}\,.
\end{equation}
 Now for any $n\geq 1$ take $\cD_n=\{k/2^n\colon k=0,\ldots, 2^n\}=\{t_k\colon k=0,\ldots, 2^n\}$ the set of $n$-dyadic numbers of $[0,1]$. By taking  $h^{(k)}$ define as before but for $(s,t)=(t_k,t_{k+1})$ and concatenating all of them, according to the dyadic meshing, we form a path $h^{n}$ with the following property: for all $k,l\in \{0,\ldots, 2^n\}$, $k\leq l$
\begin{equation}\label{eq_identity}
(\mathbf X\boxplus h^n )_{t_k,t_l}=\mathbf 1\,.
\end{equation}

	We now want to prove that $h^n$ is a good candidate for the convergence result. Let \(\mathbf Z^n:=\mathbf X\boxplus h^n\). By the local estimate for Young translation and by the construction of \(h^n\),
\begin{equation} \label{eq:313}
\|\mathbf{Z}^n\|_{p\text{-var};I_k}
\lesssim
\|\mathbf X\|_{p\text{-var};I_k}
+
\|h^n\|_{1\text{-var};I_k}
\lesssim
\|\mathbf X\|_{p\text{-var};I_k}.
\end{equation}

Let \(\Pi=\{0=u_0<\cdots<u_m=1\}\) be an arbitrary partition of \([0,1]\). For
\(u_i<1\), let \(k(i)\) be the unique index such that
\[
u_i\in [t_{k(i)},t_{k(i)+1}),
\]
and set \(k(m)=2^n-1\) if \(u_m=1\).

We split the increments of \(\Pi\) into two classes. First, consider those intervals
\([u_i,u_{i+1}]\) which are contained in a single dyadic cell. For each fixed \(k\), the
subcollection of such intervals contained in \(I_k\) is a subpartition of \(I_k\). Hence
\[
\sum_{\substack{i:\,[u_i,u_{i+1}]\subset I_k}}
|\mathbf{Z}^n_{u_i,u_{i+1}}|^p
\le
\|\mathbf{Z}^n\|_{p\text{-var};I_k}^p .
\]
Summing over \(k\), we obtain
\begin{equation} \label{eq:314}
\sum_{\substack{i:\,[u_i,u_{i+1}]\text{ contained}\\ \text{in one dyadic cell}}}
|\mathbf{Z}^n_{u_i,u_{i+1}}|^p
\le
\sum_{k=0}^{2^n-1}
\|\mathbf{Z}^n\|_{p\text{-var};I_k}^p .
\end{equation}

We now consider the remaining intervals, namely those which cross at least one dyadic
boundary. If \(k(i)<k(i+1)\), then by Chen's relation and by the identity
\(\mathbf{Z}^n_{t_{k(i)+1},t_{k(i+1)}}=\mathbf 1\), we have
\[
\mathbf{Z}^n_{u_i,u_{i+1}}
=
\mathbf{Z}^n_{u_i,t_{k(i)+1}}
\otimes
\mathbf{Z}^n_{t_{k(i+1)},u_{i+1}} .
\]
Therefore, by subadditivity of the Carnot--Carath\'eodory norm,
\begin{equation} \label{eq:315}
|\mathbf{Z}^n_{u_i,u_{i+1}}|^p
\lesssim
|\mathbf{Z}^n_{u_i,t_{k(i)+1}}|^p
+
|\mathbf{Z}^n_{t_{k(i+1)},u_{i+1}}|^p .
\end{equation}
For each dyadic cell \(I_k\), there is at most one crossing interval which starts in
\(I_k\), and hence at most one right-edge interval of the form
\[
[u_i,t_{k+1}]\subset I_k.
\]
Similarly, there is at most one crossing interval which ends in \(I_k\), and hence at most
one left-edge interval of the form
\[
[t_k,u_{i+1}]\subset I_k.
\]
Consequently, after summing \eqref{eq:315} over all crossing intervals,
\begin{equation} \label{eq:316}
\sum_{\substack{i:\,[u_i,u_{i+1}]\text{ crosses}\\ \text{a dyadic boundary}}}
|\mathbf{Z}^n_{u_i,u_{i+1}}|^p
\lesssim
2\sum_{k=0}^{2^n-1}
\|\mathbf{Z}^n\|_{p\text{-var};I_k}^p .
\end{equation}

Combining \eqref{eq:314} and \eqref{eq:316}, we get
\begin{equation} \label{eq:pvar}
\sum_{i=0}^{m-1}
|\mathbf{Z}^n_{u_i,u_{i+1}}|^p
\lesssim
\sum_{k=0}^{2^n-1}
\|\mathbf{Z}^n\|_{p\text{-var};I_k}^p .
\end{equation}
Using (3.13), this yields
\[
\sum_{i=0}^{m-1}
|\mathbf{Z}^n_{u_i,u_{i+1}}|^p
\lesssim
\sum_{k=0}^{2^n-1}
\|\mathbf X\|_{p\text{-var};I_k}^p
\le
\|\mathbf X\|_{p\text{-var};[0,1]}^p .
\]
Taking the supremum over all partitions \(\Pi\), we conclude that
\[
\|\mathbf{Z}^n\|_{p\text{-var};[0,1]}
\lesssim
\|\mathbf X\|_{p\text{-var};[0,1]},
\]
uniformly in \(n\).

Applying the compactness criterion \cite[Proposition 8.17]{frizbook} the sequence $\mathbf Z^n$ converges up to subsequences to some rough path $\mathbf{Z}$ in the $\rho_{p'}$ distance for any $p'>p$. Looking at the dyadics it is then easy to prove that $(\mathbf Z^n)$ converges punctually to $\mathbf 1$. Therefore  $\mathbf{Z}=\mathbf 1$ and we obtain \eqref{eq:weak_conv}.

{Note that so far the $h^n$ are only $C^1$, but taking $(h^{n,m})_{m \geq 0}$ smooth approximations of $h^n$, one has for each $n$ that $\|h^{n,m} - h^n\|_{1\text{-var}:[0,1]} \to 0$. In particular, by continuity of Young translation, we can choose $m(n)$ for which $\rho_{p\text{-var}}\left( \mathbf X\boxplus h^n, \mathbf X\boxplus h^{n,m(n)}\right) \leq 2^{-n}$, and we can then replace $h^n$ by $h^{n,m(n)} \in C^{\infty}$ while keeping the same convergence properties.}

 In case $\mathbf{X}\in\mathcal{C}^{\alpha}$ we can use the same argument to bound $\|\mathbf X\boxplus h^n\|_{\alpha}$ starting from a single increment and conclude accordingly.
\end{proof}
If the  initial rough path $\mathbf{X}$ is  a geometric rough path rather than a merely weakly geometric one, the following theorem provides a stronger result.

\begin{theorem}\label{thm_main_rigorous_geo}
	For any  $\fX\in \cC^{0;p\text{-var}}_{\mathbf{1}}$  the sequence $(h^n)$ in Theorem \ref{thm_main_rigorous} can be taken such that \begin{equation}\label{eq:strong_conv}
\rho_{p\text{-var}}((\mathbf X\boxplus h^n),\mathbf 1)\to 0\,.
	\end{equation}
	Moreover if $\mathbf{X} \in \mathcal{C}^{0;\alpha}_{\mathbf{1}}$ the same result holds with the metric $\rho_{\alpha} $.
\end{theorem}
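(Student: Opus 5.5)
We upgrade the convergence in Theorem~\ref{thm_main_rigorous} from $\rho_{p'\text{-var}}$ ($p'>p$) to $\rho_{p\text{-var}}$ by interpolation. The key input is a uniform control of the $p$-variation of $\mathbf Z^n:=\mathbf X\boxplus h^n$ on small intervals in terms of a control function of $\mathbf X$. Since $\mathbf X$ is strictly geometric, $\omega(s,t):=\|\mathbf X\|^p_{p\text{-var};[s,t]}$ is a continuous control. Moreover, by \cite[Theorem 8.12 and Corollary 8.24]{frizbook}, $\mathbf X$ has vanishing ``$p$-variation at small scales'':
\[
\lim_{\delta\to0}\sup_{\Pi,\,|\Pi|\le\delta}\sum_i |\mathbf X_{u_i,u_{i+1}}|^p=0 .
\]
The classical criterion \cite[Theorem 8.13 / Corollary 5.35]{frizbook} states that a sequence bounded in $p$-variation and converging in $p'$-variation converges in $p$-variation once the small-scale $p$-variation is uniformly small. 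So the plan is to establish uniform smallness of the $p$-variation of $\mathbf Z^n$ along fine partitions, and then conclude.

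\emph{Step 1 (localized bounds).} We reuse the dyadic construction. From \eqref{eq:313}, for every subinterval $[s,t]$ contained in a dyadic cell we have $|\mathbf Z^n_{s,t}|\lesssim \|\mathbf X\|_{p\text{-var};[s,t]}+\|h^n\|_{1\text{-var};[s,t]}$. The difficulty is that $\|h^n\|_{1\text{-var}}$ restricted to a subinterval of a cell is not controlled by $\omega(s,t)$; it is only controlled on whole cells, by $2M\omega(I_k)^{1/p}$. We therefore write $\tilde\omega_n(s,t)=\|\mathbf Z^n\|^p_{p\text{-var};[s,t]}$. By the argument giving \eqref{eq:pvar} applied to partitions of an arbitrary interval $[s,t]$, crossing intervals reduce to edge pieces inside cells, and we obtain
\[
\|\mathbf Z^n\|^p_{p\text{-var};[s,t]}\lesssim \sum_{k:\,I_k\cap[s,t]\neq\emptyset}\omega(I_k).
\]

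\emph{Step 2 (small-scale smallness).} Split according to whether the mesh $\delta$ is small relative to $2^{-n}$. For any partition $\Pi$, the sum $\sum_i|\mathbf Z^n_{u_i,u_{i+1}}|^p$ is bounded by the reasoning of \eqref{eq:pvar} using only those cells $I_k$ met by intervals $[u_i,u_{i+1}]$ on which $\mathbf Z^n$ has nontrivial increments. Since $\mathbf Z^n_{t_k,t_l}=\mathbf 1$ and all increments are localized in cells, we get for all $\Pi$
\[
\sum_i|\mathbf Z^n_{u_i,u_{i+1}}|^p\lesssim \sum_{k}\|\mathbf Z^n\|^p_{p\text{-var};I_k}\lesssim \sum_k\omega(I_k)^{\,}.
\]
This is bounded but not small, and we need it to be small. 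This is the main obstacle: superadditivity gives $\sum_k\omega(I_k)\le\omega(0,1)$, which is not small. The remedy is that for a strictly geometric rough path the relevant quantity is not $\omega$ but the genuinely small quantity
\[
\sum_k \omega(I_k)\le \sup_{|\Pi|\le 2^{-n}}\sum|\mathbf X_{u_i,u_{i+1}}|^p\cdot C,
\]
which fails in general: $\|\mathbf X\|_{p\text{-var};I_k}^p$ is not the same as $|\mathbf X_{I_k}|^p$. So we instead modify the construction, choosing $g^{s,t}$ via Proposition~\ref{prop_apriori} applied to $\mathbf Y^{s,t}$ in a $p''$-variation norm with $p''<p$ after first approximating. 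Concretely, using \eqref{eq_geom_approx}, pick a smooth $x$ with $\rho_{p\text{-var}}(S(x),\mathbf X)<\varepsilon$. Write $\mathbf X=S(x)\boxplus$-perturbation; translating by $-x$ via \eqref{eq_loc_Lip} gives $\rho_{p\text{-var}}(\mathbf X\boxplus(-x),\mathbf 1)\lesssim\varepsilon$. Then apply Theorem~\ref{thm_main_rigorous} to $\mathbf X':=\mathbf X\boxplus(-x)$, whose $p$-variation is $O(\varepsilon)$. The uniform bound of Step 1 gives $\|\mathbf X'\boxplus h'^n\|_{p\text{-var}}\lesssim\varepsilon$ for all $n$.

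\emph{Step 3 (conclusion).} With $h^n_\varepsilon:=-x+h'^n$ (using $\mathbf X\boxplus(a+b)=(\mathbf X\boxplus a)\boxplus b$), the resulting rough paths are uniformly $O(\varepsilon)$ in $p$-variation and converge to $\mathbf 1$ in $\rho_{p'}$. Interpolation between levels then gives $\limsup_n\rho_{p\text{-var}}(\mathbf X\boxplus h^n_\varepsilon,\mathbf 1)\lesssim\varepsilon^{\theta}$ for some $\theta>0$; more simply, the inhomogeneous distance to $\mathbf 1$ is bounded by a function of the homogeneous norm vanishing at $0$. A diagonal choice $\varepsilon=1/m$, $n=n(m)$, followed by smoothing as at the end of the proof of Theorem~\ref{thm_main_rigorous}, yields \eqref{eq:strong_conv}. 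The Hölder case is identical: we approximate in $\rho_\alpha$ using $\mathbf X\in\mathcal C^{0;\alpha}$ and use the Hölder version of the uniform bound.
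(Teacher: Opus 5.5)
Your Step 2 has a genuine gap. You claim that for a smooth $x$ with $\rho_{p\text{-var}}(S(x),\mathbf X)<\varepsilon$ one gets $\rho_{p\text{-var}}(\mathbf X\boxplus(-x),\mathbf 1)\lesssim\varepsilon$ from \eqref{eq_loc_Lip}. This does not follow. The constant $C(M)$ in \eqref{eq_loc_Lip} requires $M\ge\|x\|_{q\text{-var}}$. Unless $\mathbf X$ is the Young lift of a path of finite $q$-variation, $\|x\|_{q\text{-var}}\to\infty$ as $\varepsilon\to0$, so you get no uniform bound.

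Worse, the conclusion itself is false, exactly as the paper's introduction shows. Take the pure area rough path $\mathbf X_{s,t}=(0,\tfrac12(t-s)A)$, which lies in $\mathcal C^{0;p\text{-var}}_{\mathbf 1}$ for $p\in(2,3)$, and $x^n_t=n^{-1/2}(\cos(nt),\sin(nt))$. Then $\rho_{p\text{-var}}(S^2(x^n),\mathbf X)\to0$, yet $\mathbf X\boxplus(-x^n)\to(0,(t-s)A)\neq\mathbf 1$. So $\mathbf X\boxplus(-x)$ need not be small in $p$-variation, and Steps 2--3 collapse. Producing a smooth $x$ with $\mathbf X\boxplus(-x)$ close to $\mathbf 1$ is essentially the statement you are trying to prove.

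The route you discarded is in fact the right one, and it is the paper's proof. The inequality you declared false,
\[
\sum_{k}\|\mathbf X\|^p_{p\text{-var};I_k}\le\sup_{|\Pi|\le 2^{-n}}\sum_i|\mathbf X_{u_i,u_{i+1}}|^p ,
\]
holds with constant $1$. Concatenating near-optimal partitions of each dyadic cell $I_k$ gives a partition of $[0,1]$ of mesh at most $2^{-n}$. Your objection that $\|\mathbf X\|^p_{p\text{-var};I_k}\neq|\mathbf X_{I_k}|^p$ is beside the point, because the supremum over fine partitions already ranges over all refinements inside the cells.

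Inserting this into \eqref{eq:pvar} and \eqref{eq:313} gives $\|\mathbf X\boxplus h^n\|^p_{p\text{-var}}\lesssim\sup_{|\Pi|\le2^{-n}}\sum_i|\mathbf X_{u_i,u_{i+1}}|^p$. This tends to $0$ by the vanishing characterization of geometric rough paths (\cite[Theorem 8.22]{frizbook}). Since $\|\pi_k(g)\|\lesssim|g|^k$, you get $\rho_{p\text{-var}}(\mathbf X\boxplus h^n,\mathbf 1)\to0$ directly, with no preliminary approximation, interpolation or diagonal argument.

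The H\"older case works the same way, using the vanishing modulus $\sup_{0<t-s\le\delta}|\mathbf X_{s,t}|/|t-s|^{\alpha}\to0$. The key point is that the rescaled control on a cell $I_k$ has $1$-variation on $[a,b]\subset I_k$ bounded by a constant times the $\alpha$-H\"older norm of $\mathbf X$ restricted to $I_k$, times $|b-a|^{\alpha}$.
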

\begin{proof}
In order to prove the strong convergence result, we will essentially use what was proved previously and use the same notation. Furthermore, we will use the vanishing characterization of geometric rough paths given in \cite[Theorem 8.22]{frizbook}.

Since any geometric rough path $\fX$ is weakly geometric, we obtain the same results as in the proof of Theorem \ref{thm_main_rigorous} up to the bound \eqref{eq:pvar}.  By the vanishing characterization of geometric rough paths, for every integer $m \geq 1$, there exists an integer $n(m)$ such that, whenever the mesh of the partition $\mathcal D_n$ is sufficiently small,
\[
\forall n\geq n(m), \qquad
\sum_{k=0}^{2^n-1}
\|\mathbf X\|_{p\text{-var};[t_k,t_{k+1}]}^p
\leq \frac{1}{m}.
\]

Repeating the argument used in the proof of Theorem \ref{thm_main_rigorous}, we obtain the bound
\[
\forall n \geq n(m), \qquad
\|\fX\boxplus h^n \|_{p\text{-var}}
\lesssim \frac{1}{m^{1/p}}.
\]

Therefore, possibly after passing to a subsequence, we obtain the stronger convergence result
\[
\rho_{p\text{-var}}(\fX\boxplus h^n, \mathbf 1)
\to 0.
\]
As before, we can approximate the $C^1$ path $h^n$ by a smooth path to conclude the proof.
\end{proof}

By simply combining the  Theorems \ref{thm_main_rigorous} and \ref{thm_main_rigorous_geo} with the usual approximations property of weak geometric rough paths with signatures, we show a precise version of Theorem~\ref{thm_main}. 

\begin{theorem}\label{thm_main_result}
Let $\mathbf{X}, \mathbf{Y} \in \mathcal{C}^{p\text{-var}}_{\mathbf{1}}$. Then there exists  a sequence of paths $(h^n)\subset  C^{\infty}_0$ such that $(\mathbf X\boxplus h^n)$ converges uniformly to $\mathbf{Y}$ and for any $p'>p$ one has
	 \begin{equation}\label{eq:weak_conv_Y}
\rho_{p'\text{-var}}(\mathbf X\boxplus h^n,\mathbf{Y} )\to 0 \,.
	\end{equation}
	Moreover if $\mathbf{X}, \mathbf{Y}\in \mathcal{C}^{0,p\text{-var}}_{\mathbf{1}}$ the sequence $(h^n) $ can be chosen  such that
	 \begin{equation}\label{eq:strong_conv_Y}
\rho_{p\text{-var}}(\mathbf{X} \boxplus h^n, \mathbf{Y}) \to 0\,.
\end{equation}
If in addition $\mathbf{X}, \mathbf{Y} \in \mathcal{C}^{\alpha}$, then $(h^n)$ can be chosen such that \eqref{eq:weak_conv_Y} and \eqref{eq:strong_conv_Y} hold with the metrics $\rho_{\alpha'}$ for any $\alpha'<\alpha$, and $\rho_{\alpha}$ in the case $\mathbf{X}, \mathbf{Y} \in \mathcal{C}^{0;\alpha}_{\mathbf{1}}$.
\end{theorem}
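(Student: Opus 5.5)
The plan is to reduce to the case $\mathbf{Y}=\mathbf 1$ treated in Theorems \ref{thm_main_rigorous} and \ref{thm_main_rigorous_geo}. The key algebraic fact is that translations compose additively:
\[
(\mathbf X\boxplus h)\boxplus k=\mathbf X\boxplus (h+k)
\]
for $h,k\in C^{q\text{-var}}_0$. This holds for smooth $\mathbf X=S^N(x)$, since both sides equal $S^N(x+h+k)$. It extends to general $\mathbf X$ by the pointwise approximation \eqref{eq:pointwise} together with continuity of translation.

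Next, approximate $\mathbf Y$ by signatures of smooth paths. By \eqref{eq:approx} there are $(y^m)\subset C^{1\text{-var}}_0$, which can be taken smooth after a further approximation, such that $S^{\lfloor p\rfloor}(y^m)=\mathbf 1\boxplus y^m\to\mathbf Y$ uniformly with $\sup_m\|S^{\lfloor p\rfloor}(y^m)\|_{p\text{-var}}<\infty$. By interpolation this gives $\rho_{p'\text{-var}}$-convergence for every $p'>p$. When $\mathbf Y$ is geometric, we choose $(y^m)$ with $\rho_{p\text{-var}}(S^{\lfloor p\rfloor}(y^m),\mathbf Y)\to0$ by definition \eqref{eq_geom_approx}. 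The Hölder statements are handled in the same way.

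Now take $(k^n)\subset C^\infty_0$ from Theorem \ref{thm_main_rigorous} (respectively Theorem \ref{thm_main_rigorous_geo}) with $\mathbf X\boxplus k^n\to\mathbf 1$. Set $h^{n,m}:=k^n+y^m$, so that
\[
\mathbf X\boxplus h^{n,m}=(\mathbf X\boxplus k^n)\boxplus y^m.
\]
For fixed $m$, the local Lipschitz estimate \eqref{eq_loc_Lip}, applied with $p'$ in place of $p$ (admissible since $1/p'+1/q>1$ still holds with $q=1$), gives
\[
\rho_{p'\text{-var}}\big((\mathbf X\boxplus k^n)\boxplus y^m,\ \mathbf 1\boxplus y^m\big)\le C(M_m)\,\rho_{p'\text{-var}}(\mathbf X\boxplus k^n,\mathbf 1)\to0
\]
as $n\to\infty$. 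The constant $M_m$ is finite because the proof of Theorem \ref{thm_main_rigorous} shows $\|\mathbf X\boxplus k^n\|_{p\text{-var}}\lesssim\|\mathbf X\|_{p\text{-var}}$ uniformly in $n$. Uniform convergence follows in the same way, as $\rho_{p'\text{-var}}$ dominates uniform distance for paths started at $\mathbf 1$.

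A diagonal argument then finishes the proof. Choose $m$ so that $\rho_{p'}(S(y^m),\mathbf Y)<1/j$, then $n(m)$ so that the first error is $<1/j$. The triangle inequality yields \eqref{eq:weak_conv_Y}, and \eqref{eq:strong_conv_Y} follows identically with $p'=p$ in the geometric case. The metric for $p'$ depends on $p'$; to make a single sequence work for all $p'>p$, I would run the diagonalisation along $p'_j\downarrow p$. Convergence for $p'_j$ implies convergence for all larger exponents.

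The main obstacle I expect is justifying the composition identity rigorously at the rough level, and checking that \eqref{eq_loc_Lip} applies with a constant uniform in $n$. Both should follow from the RDE characterisation \eqref{eq:RDE_translation}. There, translating by $h$ and then by $k$ solves the same equation driven by $\mathbf X$ plus the drift $h+k$, and uniqueness closes the argument.
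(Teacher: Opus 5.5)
Your proposal is correct and follows essentially the same route as the paper. It reduces to $\mathbf Y=\mathbf 1$ via Theorems \ref{thm_main_rigorous} and \ref{thm_main_rigorous_geo}, uses $\mathbf X\boxplus(k^n+y^m)=(\mathbf X\boxplus k^n)\boxplus y^m$ with $S(y^m)\to\mathbf Y$ and continuity of $\mathbf Z\mapsto\mathbf Z\boxplus y^m$ for fixed $m$, and makes the diagonal choice $h^m=k^{n(m)}+y^m$; your diagonalisation along $p'_j\downarrow p$ (one sequence for all $p'>p$) and your justification of the composition identity are small refinements of points the paper leaves implicit.
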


\begin{proof}
We prove only \eqref{eq:weak_conv_Y}, since the other statements can be established by the same argument.\ By Theorem~\ref{thm_main_rigorous_geo}, there exists a sequence $(k^n)\subset  C^{\infty}_0$ such that $(\mathbf X\boxplus k^n)$ converges uniformly to $\mathbf{1}$ and for any $p'>p$ one has
\[
\rho_{p'\text{-var}}(\mathbf{X} \boxplus k^n, \mathbf{1}) \to 0.
\]
Since $\mathbf{Y} \in \mathcal{C}^{p\text{-var}}_{\mathbf{1}}$, from property \eqref{eq:approx} there exists a sequence $(y^m)\subset  C^{\infty}_0$ such that $S(y^m)\to \mathbf{Y}$ uniformly and
\[
\rho_{p'\text{-var}}(S(y^m), \mathbf{Y}) \to 0.
\]
Using that $\mathbf{1} \boxplus y^m = S(y^m)$ and the additivity of Young translation, we have
\[
\mathbf{X} \boxplus (k^n + y^m)
=
(\mathbf{X} \boxplus k^n) \boxplus y^m.
\]
We fix $m$. By continuity of the map $\mathbf{Z} \mapsto \mathbf{Z} \boxplus y^m$ in $p$-variation, as $n \to \infty$
\[
(\mathbf{X} \boxplus k^n) \boxplus y^m \to \mathbf{1} \boxplus y^m = S(y^m)\,.
\]
Hence, for each $m\geq 1$, there exists $n(m)$ such that
\[
\rho_{p'\text{-var}}(\mathbf{X} \boxplus (k^{n(m)} + y^m), S(y^m)) < \frac{1}{m}.
\]
Therefore if we define
\[
h^m := k^{n(m)} + y^m,
\]
we obtain \eqref{eq:weak_conv_Y}.
\end{proof}

\section{Applications to support theorems}\label{sec:4}

%
%
%
%
%
Fix $\alpha \in (0,1]$, and $q \geq 1$ with $\alpha + \frac{1}{q} > 1$. Given a subset $\mathcal{F}$ of $\mathcal{C}^{0;\alpha}_{\mathbf{1}}$, we define for any $h \in C^{0;\alpha,q\text{-var}}_0$, the Young translation on the set $\mathcal{F}$ by  $\mathcal{F}\boxplus h = \{ \fX \boxplus h, \; \fX \in \mathcal{F}\}$ and then
\begin{equation} \label{eq:defK}
\cK_{\alpha,q}({ \mathcal{F}}) = \left\{ h \in C^{\alpha,q\text{-var}}_0 : \mathcal F\boxplus h\subset \mathcal F \right\}.
\end{equation}
Similarly for $F \subset C^{\alpha,q\text{-var}}_0$, we will also denote $K_{\alpha,q}(F) = \{h \in C^{0;\alpha,q\text{-var}}_0: h+F \subset F \}$. Note that for any $\mathcal{F}$, $\cK_{\alpha,q}({ \mathcal{F}})$ is stable under addition thanks to the property of the translation. As a corollary of our main result, we immediately have the following sufficient condition for a closed set to be the whole space.

\begin{corollary} \label{cor:main}
Let $\mathcal{F}$ be a closed subset of $\mathcal{C}^{0;\alpha}_{\mathbf{1}}$, $\mathcal{F} \neq \emptyset$, and such that, for some $q$ with $\frac{1}{q}+\alpha>1$,  $\cK_{\alpha,q}({\mathcal{F}})$ is dense in $C^{0;\alpha,q\text{-var}}$. Then $\mathcal{F}= \mathcal{C}^{0;\alpha}_{\mathbf{1}}$.
\end{corollary}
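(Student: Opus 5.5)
The argument mirrors the linear support argument from the introduction, with Theorem~\ref{thm_main_result} supplying the density of orbits. Fix an arbitrary $\mathbf{Y}\in\mathcal{C}^{0;\alpha}_{\mathbf{1}}$ and pick some $\mathbf{X}\in\mathcal{F}$, which is possible since $\mathcal{F}\neq\emptyset$. The goal is to show $\mathbf{Y}\in\mathcal{F}$.

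\emph{Step 1 (invariance under smooth translations).} Since both $\mathbf{X},\mathbf{Y}$ lie in $\mathcal{C}^{0;\alpha}_{\mathbf{1}}$, Theorem~\ref{thm_main_result} gives $(h^n)\subset C^\infty_0$ with $\rho_\alpha(\mathbf{X}\boxplus h^n,\mathbf{Y})\to 0$. Each $h^n$ lies in $C^{0;\alpha,q\text{-var}}_0$, which by definition contains $C^\infty_0$. By the density assumption, for each $n$ choose $k^{n,j}\in\cK_{\alpha,q}(\mathcal{F})$ with $\|k^{n,j}-h^n\|_{\alpha,q\text{-var}}\to 0$ as $j\to\infty$. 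Then $\mathbf{X}\boxplus k^{n,j}\in\mathcal{F}$ by definition of $\cK_{\alpha,q}(\mathcal{F})$.

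\emph{Step 2 (continuity).} By the local Lipschitz continuity of Young translation in the H\"older setting, i.e.\ the H\"older analogue of \eqref{eq_loc_Lip} on $\mathcal{C}^{\alpha}_{\mathbf{1}}\times C^{\alpha,q\text{-var}}_0$, we get $\rho_\alpha(\mathbf{X}\boxplus k^{n,j},\mathbf{X}\boxplus h^n)\to 0$ as $j\to\infty$. Closedness of $\mathcal{F}$ gives $\mathbf{X}\boxplus h^n\in\mathcal{F}$ for each $n$. Letting $n\to\infty$ and using closedness again yields $\mathbf{Y}\in\mathcal{F}$. A diagonal choice $j(n)$ would equally work.

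\emph{Main obstacle.} The main point to check is that all objects stay in the right spaces. First, $\mathbf{X}\boxplus k$ must lie in $\mathcal{C}^{0;\alpha}_{\mathbf{1}}$ (not merely $\mathcal{C}^{\alpha}_{\mathbf{1}}$) so that membership in $\mathcal{F}$ is meaningful; this is built into the definition of $\cK_{\alpha,q}$. Second, the convergence $\mathbf{X}\boxplus h^n\to\mathbf{Y}$ must hold in the strong metric $\rho_\alpha$ rather than only $\rho_{\alpha'}$, which is why the geometric version \eqref{eq:strong_conv_Y} of Theorem~\ref{thm_main_result} is needed. Third, the Lipschitz estimate requires a uniform bound $M$, which holds because $\|k^{n,j}\|_{\alpha,q\text{-var}}$ is bounded for fixed $n$.
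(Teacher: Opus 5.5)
Your proof is correct and follows essentially the paper's argument. You fix $\mathbf X\in\mathcal F$, use the strong $\rho_\alpha$ version of Theorem~\ref{thm_main_result} to get smooth $h^n$ with $\mathbf X\boxplus h^n\to\mathbf Y$, approximate the $h^n$ by elements of $\cK_{\alpha,q}(\mathcal F)$, and conclude by continuity of Young translation and closedness of $\mathcal F$. The only cosmetic difference is that you apply closedness twice, first to get $\mathbf X\boxplus h^n\in\mathcal F$ using continuity in the second argument only, whereas the paper uses one sequence and writes $\mathbf X\boxplus h'_n=(\mathbf X\boxplus h_n)\boxplus(h'_n-h_n)$ together with joint continuity.
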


\begin{proof}
We fix $\mathbf{X} \in \mathcal{F}$, and let $\mathbf{Y}$ be an arbitrary element in $\mathcal{C}^{0;\alpha}_{\mathbf{1}}$. By the main theorem, there exists $h_n \in C^{\infty}_0$ such that $ \mathbf{X} \boxplus h_n\to  \mathbf{Y}$.  Let $h'_n \in \cK_{\alpha,q}(\mathcal{F})$ s.t. $\|h'_n - h_n \|_{C^{\alpha,q\text{-var}}}  \to 0$. We have
\[
(\mathbf{X}\boxplus h'_n) = (\mathbf{X}\boxplus h_n)\boxplus (h'_n - h_n) \to \mathbf{Y}
\]
by the joint continuity of Young translation. In addition, by definition of $\cK_{\alpha,q}(\mathcal{F})$, $\mathbf{X} \boxplus h'_n\in \mathcal{F}$ for each $n$. Since $\mathcal{F}$ is closed, it must contain $\mathbf{Y}$.

\end{proof}

As further corollaries, we will get the full support property for certain rough path-valued random variables.

\subsection*{Random series}
We first have an application to random series.
\begin{corollary} \label{cor:series}
Let $X_n$ be independent path-valued random variables, with values in $C^{0;\alpha,q\text{-var}}$ for some $\alpha \in (0,1]$ and $\frac{1}{q}+\alpha>1$. Assume that $\mathbf{X}$ is a $\mathcal{C}^{0;\alpha}_{\mathbf{1}}$-valued random variable, such that
 \[
 \lim_{N \to \infty} S_{\lfloor \alpha^{-1} \rfloor}\left( \sum_{n \leq N} X_n \right) = \mathbf{X} \mbox{ in probability, in }\mathcal{C}^{0,\alpha}_{\mathbf{1}}.
 \]
For each $n$, let $S_n \subset C^{0;\alpha,q\text{-var}}_0$ be the support of the law of $X_n$ in $C^{\alpha,q\text{-var}}_0$,  let $K_n =K_{\alpha,q} (S_n)$, and assume that $\sum_{n \geq 0} K_n $ (the set of finite sums of elements of the $K_n$) is dense in $C^{0;\alpha,q\text{-var}}$. Then, the law of $\mathbf{X}$ has full support in $\mathcal{C}^{0;\alpha}_{\mathbf{1}}$.
\end{corollary}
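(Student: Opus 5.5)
The plan is to apply Corollary~\ref{cor:main} to $\mathcal F := \operatorname{supp}(\mathrm{Law}(\mathbf X))\subset \mathcal{C}^{0;\alpha}_{\mathbf 1}$. Since $\mathcal{C}^{0;\alpha}_{\mathbf 1}$ is Polish, $\mathcal F$ is closed and nonempty. It therefore suffices to show that $\mathcal K_{\alpha,q}(\mathcal F)\supset \sum_{n} K_n$, because the latter set is dense by assumption. As $\mathcal K_{\alpha,q}(\mathcal F)$ is stable under addition, it is enough to prove $K_n\subset \mathcal K_{\alpha,q}(\mathcal F)$ for each fixed $n$.

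Fix $n_0$ and $k\in K_{n_0}$, so $k+S_{n_0}\subset S_{n_0}$. Write $Z^N:=\sum_{n\le N, n\ne n_0} X_n$ for $N\ge n_0$. By the additivity of Young translation on smooth-like lifts (the identity $S(x)\boxplus h = S(x+h)$ for $x\in C^{0;\alpha,q\text{-var}}$, extended by continuity), we have $S(\sum_{n\le N}X_n)\boxplus k = S(Z^N + (X_{n_0}+k))$.

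Take $\mathbf Y\in\mathcal F$ and an open neighbourhood $U$ of $\mathbf Y\boxplus k$. By continuity of $\boxplus k$, there is an open $V\ni \mathbf Y$ with $V\boxplus k\subset U$. I would first show that $\mathbb P(\mathbf X\boxplus k\in U)>0$, and then that $\mathbf X\boxplus k\in U$ forces $\mathbf X$ to enter $\mathcal F\boxplus k$ with positive probability in a way that puts $\mathbf Y\boxplus k$ in $\mathcal F$.

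The cleaner route is a law comparison. Let $\mu$ be the law of $\mathbf X$ and $\mu^k$ the law of the limit obtained when $X_{n_0}$ is replaced by $X_{n_0}+k$; that limit is exactly $\mathbf X\boxplus k$, since convergence in probability is preserved by continuity of $\boxplus k$. I claim $\operatorname{supp}\mu^k\subset\operatorname{supp}\mu$. To prove it, condition on $(X_n)_{n\ne n_0}$. By independence, $\mathbf X = \Psi((X_n)_{n\ne n_0}, X_{n_0})$ and $\mathbf X\boxplus k = \Psi(\cdot, X_{n_0}+k)$, where $\Psi$ is a measurable map defined almost surely through a subsequential a.s.\ limit. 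The inclusion $k+S_{n_0}\subset S_{n_0}$ says that the law of $X_{n_0}+k$ is supported in $S_{n_0}$.

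The main obstacle is that $\Psi$ need not be continuous in the $n_0$-th variable, so support inclusion cannot be transferred directly. To handle this, I would use the shift structure: $\Psi(z, x+k) = \Psi(z,x)\boxplus k$ for all $x$ and a.e.\ $z$. The underlying reason is that $\Psi(z,x+k')$ for $x+k'$ near $x'$ equals $\Psi(z,x)\boxplus (x'-x)$ up to a small continuous error, so $\Psi(z,\cdot)$ is continuous in $x$. Granting this continuity, the argument closes as follows. Given $\mathbf Y\boxplus k$ and $U$, pick $x'\in S_{n_0}$ and $z$ in the support of the other variables with $\Psi(z,x'-k)\in V$; this is possible because $\mu(V)>0$, after approximating. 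Then $\Psi(z,x')\in U$ with $x'\in S_{n_0}$, and continuity together with $x'\in S_{n_0}$ gives $\mu(U)>0$. Hence $\mathbf Y\boxplus k\in\mathcal F$, so $k\in\mathcal K_{\alpha,q}(\mathcal F)$, and Corollary~\ref{cor:main} concludes that $\mathcal F=\mathcal{C}^{0;\alpha}_{\mathbf 1}$.
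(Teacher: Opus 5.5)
Your reduction is the same as the paper's. You apply Corollary~\ref{cor:main} to $\mathcal F=\operatorname{supp}\mu$, use that $\mathcal K_{\alpha,q}(\mathcal F)$ is stable under addition to reduce to $K_{n_0}\subset\mathcal K_{\alpha,q}(\mathcal F)$, and exploit the independence of $X_{n_0}$ from the remaining terms. The gap is in the last step. You choose one point $z$ and $x'=x+k\in S_{n_0}$ with $\Psi(z,x')\in U$, then conclude $\mu(U)>0$ from continuity of $\Psi(z,\cdot)$ and $x'\in S_{n_0}$. That only gives a neighbourhood $B$ of $x'$ with $\mathbb P(X_{n_0}\in B)>0$ for this \emph{fixed} $z$. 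A single $z$ generally carries zero probability, and you cannot enlarge it to a neighbourhood, because $\Psi$ is merely measurable in $z$: it is an a.s.\ limit of the whole series, and need not even be defined at a prescribed point of the support of $(X_n)_{n\neq n_0}$. So the real obstacle is the lack of continuity in $z$, not in $x$. Continuity in $x$ comes for free from your identity $\Psi(z,x)=\Psi(z,0)\boxplus x$, and ``after approximating'' does not address the lack of continuity in $z$.

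There are two ways to repair this. The first is to integrate over $z$. Write $\nu,\lambda$ for the laws of $(X_n)_{n\neq n_0}$ and $X_{n_0}$. Independence gives $\int\lambda\{x:\Psi(z,x+k)\in U\}\,\nu(\mathrm dz)=\mathbb P(\mathbf X\boxplus k\in U)\ge\mu(V)>0$. Hence for a $\nu$-positive set of $z$ there is $x_0\in S_{n_0}$ with $\Psi(z,x_0+k)\in U$. For each such $z$, your continuity argument at $x_0+k\in S_{n_0}$ gives $\lambda\{x:\Psi(z,x)\in U\}>0$, and integrating against $\nu$ yields $\mu(U)>0$.

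The second is the paper's route, which uses your shift identity as a change of variables. The random rough path $\mathbf Y^{n_0}:=\mathbf X\boxplus(-X_{n_0})=\lim_N S(Z^N)$ exists by continuity of translation and is independent of $X_{n_0}$. Then $\mathbf X=\mathbf Y^{n_0}\boxplus X_{n_0}$ is the image of a product law under the \emph{jointly} continuous map $(\mathbf Y,x)\mapsto\mathbf Y\boxplus x$. Hence $\mathcal F=\overline{\{\mathbf Y\boxplus x:\ \mathbf Y\in\operatorname{supp}\mathrm{Law}(\mathbf Y^{n_0}),\ x\in S_{n_0}\}}$. This set is visibly stable under $\boxplus k$ for $k\in K_{n_0}$, since $(\mathbf Y\boxplus x)\boxplus k=\mathbf Y\boxplus(x+k)$ with $x+k\in S_{n_0}$. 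This formulation avoids both the measurable map $\Psi$ on sequence space and the Fubini step.
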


\begin{proof}

Let $\mathcal{S}$ be the support of the law of $\mathbf{X}$ in $\mathcal{C}^{0;\alpha}_{\mathbf{1}}$. Note that since $\mathcal{C}^{0;\alpha}_{\mathbf{1}}$ is a Polish space, $\mathcal{S}$ is nonempty. Fix $m \geq 0$, and note that one has $\mathbf{X} =  \mathbf{Y}^m \boxplus X_m$, where 
\[\mathbf{Y}^m = \mathbf{X} \boxplus (- X_m) = \lim_{N\to\infty} S\left( \sum_{n \leq N, n \neq m} X_n\right)\] is independent from $X_m$. By continuity of Young translation, it follows that
\[
\mathcal{S} = \overline{ \left\{  (\mathbf{Y}\boxplus x), \mathbf{Y} \in Supp(Law(\mathbf{Y}^{m})) , x \in \cS_m  \right\}}.
\]
The above set is stable under Young translation along $h$ for any $h \in \cK_m$, and it follows that $K_m \subset \cK_{\alpha,q}(\mathcal{S})$ for any $m$, and therefore $\sum_m K_m \subset \cK_{\alpha,q}(\mathcal{S})$. Since $ \mathcal{S}$ is a nonempty closed set, and $\sum_m K_m$ is dense in $C^{0;\alpha,q\text{-var}}$, we conclude by Corollary \ref{cor:main}.

%
%
%
%
\end{proof}
\begin{remark}
This corollary implies for instance, that if
\[
X = \sum_{n} \gamma_n x_n,
\]
where $\gamma_n$ are independent random variables, each with full support on $\bR$, and the $x_n$ are a collection of smooth paths such that their span is dense in $C^1([0,T],\bR^d)$, then if $X$ admits a canonical rough path lift (i.e. as limit of lifts of partial sums), then its law has full support on rough path space.
\end{remark}
\subsection*{Gaussian rough paths}
We now consider the case of Gaussian rough paths. Given a $C([0,T],\bR^d)$-valued Gaussian process $X$, recall that its Cameron-Martin space $\cH_X$ is defined as those elements $h \in C([0,T],\bR^d)$ such that the law of $X+h$ is equivalent to that of $X$. It then holds that $\cH_X$ can be equipped with a norm $\|\cdot\|_{\cH_X}$ under which it is a Hilbert space, and such that for any $h \in \cH_X$, there exists a random variable $Z_h$ with distribution $\cN(0,\|h\|_{\cH_X}^2)$ such that
\[
\bE\left[F(X+ h) \right] = \bE\left[ F(X) \exp\left(Z_h - \frac 1 2 \|h\|_{\cH_X}^2\right)\right]
\]
for any Borel-measurable $F$ (this is the Cameron-Martin formula).

\begin{corollary}[Gaussian rough paths] \label{cor:gauss}
Fix  $\alpha \in (0,1]$ and $q \geq 1$ with $\frac{1}{q}+\alpha>1$. Let $X^n$ be a sequence of $C^{0;\alpha,q\text{-var}}$-valued Gaussian processes, and denote by  $\cH_{X^n}$ their Cameron-Martin spaces. Assume that $\mathbf{X}$ is a $\mathcal{C}^{0;\alpha}_{\mathbf{1}}$-valued random variable, such that
 \[
 \lim_{n \to \infty} S_{\lfloor \alpha^{-1} \rfloor}\left( X_n \right) = \mathbf{X} \mbox{ in probability, in }\mathcal{C}^{0,\alpha}_{\mathbf{1}}.
 \]
Further assume that
\[
\cH = \left\{ h = \lim_{n \to \infty} h_n \mbox{ in }C^{\alpha,q\text{-var}}, \;\; h_n \in \mathcal{H}_{X^n}, \sup_{n\geq 0} \|h_n\|_{\cH_{X^n}} < \infty \right\}
\]
is dense in $C^{0;\alpha,q\text{-var}}_0$. Then the law of $\mathbf{X}$ has full support in $\mathcal{C}^{0;\alpha}_{\mathbf{1}}$.
\end{corollary}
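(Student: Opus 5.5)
The plan is to reduce the statement to Corollary~\ref{cor:main} with $\mathcal{F}=\mathcal{S}$, the support of the law of $\mathbf{X}$ in $\mathcal{C}^{0;\alpha}_{\mathbf{1}}$. The set $\mathcal{S}$ is nonempty and closed, since $\mathcal{C}^{0;\alpha}_{\mathbf{1}}$ is Polish. It then suffices to show $\cH \subset \cK_{\alpha,q}(\mathcal{S})$; the density assumption on $\cH$ then concludes. So we fix $h=\lim_n h_n$ with $h_n\in\cH_{X^n}$, $\sup_n\|h_n\|_{\cH_{X^n}}\le K$, and $h_n\to h$ in $C^{\alpha,q\text{-var}}$. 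The goal is to show that $\mathbf{Y}\boxplus h\in\mathcal{S}$ for all $\mathbf{Y}\in\mathcal{S}$. Equivalently, for every open $U\ni \mathbf{Y}\boxplus h$ we want $\bP(\mathbf{X}\in U)>0$.

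\textbf{Step 1 (approximation).} Since $S(X^n)\to\mathbf{X}$ in probability, $h_n\to h$, and Young translation is jointly continuous, we get $S(X^n+h_n)=S(X^n)\boxplus h_n\to \mathbf{X}\boxplus h$ in probability. Pick a small ball $B$ around $\mathbf{Y}$ such that $B\boxplus h$ is contained in a smaller open set $U'$ with $\overline{U'}\subset U$. Then $\bP(\mathbf{X}\in B)=:c>0$. By convergence in probability, $\liminf_n \bP(S(X^n+h_n)\in U')\ge \bP(\mathbf{X}\boxplus h\in U'')$ for an intermediate open set $U''$, and this is at least $c$, up to adjusting the radii.

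\textbf{Step 2 (Cameron--Martin with uniform bounds).} We transfer positivity back to $X^n$ uniformly in $n$. By the Cameron--Martin formula, $\bP(S(X^n+h_n)\in U')=\bE[\mathbf{1}_{S(X^n)\in U'}\exp(Z_{h_n}-\tfrac12\|h_n\|^2)]$. Cauchy--Schwarz gives $\bE[e^{2Z_{h_n}-\|h_n\|^2}]=e^{\|h_n\|^2}\le e^{K^2}$, hence
\[
\bP(S(X^n+h_n)\in U')\le e^{K^2/2}\,\bP(S(X^n)\in U')^{1/2}.
\]
So $\bP(S(X^n)\in U')\ge c^2e^{-K^2}/4>0$ for all large $n$.

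\textbf{Step 3 (passing to the limit).} Since $S(X^n)\to\mathbf{X}$ in probability and $\overline{U'}\subset U$, we have $\bP(\mathbf{X}\in U)\ge\limsup_n \bP(S(X^n)\in U')-o(1)>0$. Hence $\mathbf{Y}\boxplus h\in\mathcal{S}$. Since $U$ was an arbitrary neighbourhood, this gives $\cH\subset\cK_{\alpha,q}(\mathcal{S})$, and Corollary~\ref{cor:main} finishes the proof.

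The main obstacle is Step 2. The Cameron--Martin spaces vary with $n$, so no quasi-invariance is available at the limit itself. The uniform bound $\sup_n\|h_n\|_{\cH_{X^n}}$ is exactly what makes the Radon--Nikodym densities uniformly $L^2$-bounded. A secondary technical point is the bookkeeping of nested open sets and radii needed to pass convergence in probability through open sets. The sets must be chosen so that the constant $c$ does not degrade with $n$.
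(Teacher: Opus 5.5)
Your proof is correct and follows the paper's route. You reduce to Corollary~\ref{cor:main} by showing $\cH\subset\cK_{\alpha,q}(\mathcal S)$, approximate by $S(X^n)\boxplus(\pm h_n)$, and apply the Cameron--Martin formula at each level $n$, with $\sup_n\|h_n\|_{\cH_{X^n}}<\infty$ giving uniform moment bounds on the densities. The difference is only a mirror image: the paper shifts by $-h_n$ toward $\mathbf{Y}$ and uses the reverse H\"older bound $\bE[1_A L]\ge \bP(A)^2\,\bE[L^{-1}]^{-1}$, whereas you shift by $+h_n$ away from $\mathbf{Y}$ and use the Cauchy--Schwarz bound $\bE[1_A L]\le \bP(A)^{1/2}\,\bE[L^2]^{1/2}$. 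Your intermediate set $U''$ is unnecessary, since the portmanteau theorem gives $\liminf_n\bP(S(X^n+h_n)\in U')\ge\bP(\mathbf{X}\boxplus h\in U')\ge c$ directly.
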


\begin{proof}
Let $\mathcal{S}$ be the support of the law of $\mathbf{X}$ in $\mathcal{C}^{0;\alpha}_{\mathbf{1}}$. By corollary \ref{cor:main}, it suffices to show that $\cH \subset \cK_{\alpha,q}(\mathcal{S})$. We fix $h \in \cH$, $h = \lim_{n \to \infty} h_n$ with $h_n$ as in the definition of $\mathcal{H}$. We take $\mathbf{Y}$ in $\mathcal{S}$ and aim to show that $\mathbf{Y}\boxplus h$ is again in $\mathcal{S}$. Fix $\epsilon >0$. By continuity of Young translation, there exists $\epsilon'>0$ s.t.
\[
P(d_{\alpha}(\mathbf{X},\mathbf{Y}\boxplus h) < \epsilon)\;\; \geq \;\;\bP(d_{\alpha}((\mathbf{X}\boxplus -h),\mathbf{Y}) < \epsilon').
\]
We further have, using joint continuity of Young translation in the first equality,
 \begin{align*}
\bP(d_{\alpha}(\mathbf{X}\boxplus -h,\mathbf{Y}) < \epsilon')\geq  & \liminf_{n \to \infty}\bP(d_{\alpha}(S(X_n-h_n),\mathbf{Y}) < \epsilon'/2)  \\
= & \liminf_{n \to \infty} \bE \left[ 1_{\{d_{\alpha}(S(X_n),\mathbf{Y}) < \epsilon'/2\}} Y^n \right] 
\end{align*}
by the Cameron-Martin formula, where $L^n= \exp( \|h_n\|_{\cH_{X^n}} Z_n - \frac{1}{2}  \|h_n\|_{\cH_{X^n}}^2)$ for some $Z_n \sim \cN(0,1)$. In particular $\bE[(Y^n)^{p}] = \exp((p^2-p)/2 \|h_n\|^2)$ for $p \in \bR$. We then use H\"older's inequality (in the form $\bE[XY] \geq \bE[X^{1/2}]^2 \bE[Y^{-1}]^{-1}$) to write that the above quantity satisfies
\begin{align*}
\liminf_{n \to \infty} \bE \left[ 1_{\{d_{\alpha}(S(X_n),\mathbf{Y}) < \epsilon'/2\}} L^n \right] \geq& \liminf_{n \to \infty}\bP(d_{\alpha}(S(X_n),\mathbf{Y}) < \epsilon'/2)^2 \bE[L_n^{-1}]^{-1} \\
\geq&\bP(d_{\alpha}(S(\mathbf{X}),\mathbf{Y}) < \epsilon'/4)^2 \exp( -  \limsup_{n \to \infty}\|h_n\|^2)
\end{align*}
which is strictly positive by assumption. In conclusion, $\mathbf{Y}\boxplus h \in \mathcal{S}$ for each $h \in \cH$, $\mathbf{Y} \in \mathcal{S}$, i.e. $\cH \subset \cK(\mathcal{S})$, and we conclude by Corollary \ref{cor:main}.
\end{proof}

\subsection*{Non-centered Fractional Brownian rough paths}

Fix $\frac 1 3 < \alpha < H \leq \frac 1 2$. Let $\fX=(x,\mathbb X)$ be a fixed element of $\mathcal{C}^{0;\alpha}_{\mathbf{1}}$, we want to consider the random rough path $\fB^H \widehat{\boxplus}\;  \fX$, the lift of fractional Brownian motion $B^H$ with Hurst index $H$, centered at $\fX$. Since this is not a standard object, we define it in the proposition below.
 
\begin{proposition}
Fix $\frac 1 3 < \alpha < H \leq \frac 1 2$, a fractional Brownian motion $B^H$ and $\fX$ in $\mathcal{C}^{0;\alpha}_{\mathbf{1}}$. Let $B_n^H$ be piecewise linear approximations of $B^H$ and $x_n$  $C^1$ paths with $S^2(x_n) \to \fX$ in $\cC^{0;\alpha}_{\mathbf 1})$. Then there exists a random ($\alpha$-H\"older) geometric rough path $\fB^H \widehat{\boxplus}\;  \fX$ such that
\begin{equation}
\fB^H \widehat{\boxplus}\;  \fX = \lim_n S^2(B_n^H +x_n),
\end{equation}
in probability and $\alpha$-H\"older rough path topology. In addition, the limit does not depend on the choice of the sequence $(x_n)$.
\end{proposition}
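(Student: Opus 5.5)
The plan is to reduce the statement to the existence of the joint (Young-type) lift of the pair $(B^H, \fX)$ as a $2d$-dimensional geometric rough path, and then push it forward through the sum homomorphism $F$ used in the proof of the RDE representation of Young translation. Since $\alpha<1/2$ and the regularity of $\fX$ is only $\alpha$, the pairing is not a Young pairing. The needed cross integrals $\int B^H\otimes \mathrm{d}x$ and $\int x\otimes \mathrm{d}B^H$ must therefore be constructed probabilistically, using the independence of $B^H$ (random) and $\fX$ (deterministic).

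\textbf{Step 1 (cross integrals).} For the deterministic $C^1$ paths $x_n$, I would consider the Wiener integrals $I_n(s,t)=\int_s^t B^H_{s,u}\otimes \mathrm{d}x_n(u)$ and their approximations $\int_s^t B^H_{n;s,u}\otimes \mathrm{d}x_n(u)$. These are centered Gaussian random variables in the first Wiener chaos. The cross term $\int x_{n;s,u}\otimes \mathrm{d}B^H_u$ is then determined by integration by parts, i.e.\ by the geometric relation $\mathrm{Sym}$ of level two. I would bound their variance through the $2D$ covariance of $B^H$, which has finite $\rho$-variation with $\rho=1/(2H)\le 1$ (complementary Young regularity in the sense of Friz--Victoir). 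This gives
\[
\bE\bigl|I_n(s,t)-I_m(s,t)\bigr|^2 \lesssim |t-s|^{2H}\,\|x_n-x_m\|^2_{\ldots},
\]
but only in a norm that sees first-level regularity. Here lies the main obstacle: $x_n\to x$ only in $\alpha$-H\"older with $\alpha<H$, and the pairing $\int B\,\mathrm{d}x$ requires $H+\alpha>1/2$. This holds, since $\alpha>1/3$ and $H>1/3$, and $1/(2\rho)+\alpha = H+\alpha >1/2$ is exactly the 2D Young condition between the covariance of $B$ (of finite $1/(2H)$-variation) and the integrator $x$ paired twice. So I would estimate $\bE|I(s,t)|^2 = \int\int R(\mathrm{d}u,\mathrm{d}v)$-type double Young integrals against $x\otimes x$, obtaining a bound $\lesssim |t-s|^{2(H+\alpha)}\|x\|_\alpha^2$ and its difference analogue. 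Gaussian hypercontractivity then upgrades this to all $L^q$ moments.

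\textbf{Step 2 (convergence).} With moment bounds $\bE|\pi_k(\mathbf Z^n_{s,t}-\mathbf Z^m_{s,t})|^{q}\lesssim \epsilon_{n,m}^q|t-s|^{qkH'}$ for $\mathbf Z^n=S^2(B^H_n\oplus x_n)$ and some $H'>\alpha$, I would combine three ingredients. The pure $B$ part converges by the standard fBm lift \cite{FV10gaussian}. The pure $x$ part converges deterministically. The cross parts converge by Step 1. The Kolmogorov criterion for rough paths (Friz--Victoir, Theorem A.13) then gives convergence in probability in $\alpha$-H\"older rough path topology to a joint geometric rough path $\mathbf Z$. The limit is in $\cC^{0;\alpha}$ because it is a limit of smooth lifts in the $\alpha$-H\"older metric (equivalently, $H'>\alpha$). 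I would then define $\fB^H\widehat\boxplus\fX := F(\mathbf Z)$, with $F$ the group homomorphism induced by $(\xi_1,\xi_2)\mapsto \xi_1+\xi_2$. Since $F(S^2(B_n^H\oplus x_n))=S^2(B^H_n+x_n)$ and $F$ is continuous for rough path metrics, the claimed convergence follows.

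\textbf{Step 3 (independence of the sequence).} Given two approximating sequences $x_n, x_n'$ of $\fX$, I would interlace them into a single sequence that still converges to $\fX$. Step 2 applied to the interlaced sequence gives a single limit in probability, so both subsequential limits coincide almost surely. Note that the level-two part of $\fX$ enters only through $\lim S^2(x_n)=\fX$. The cross terms are Wiener integrals that depend continuously on the first level $x_n\to x$ in the estimate of Step 1, so they do not depend on the choice of the approximating sequence either. The delicate point I expect is making the 2D Young estimate of Step 1 precise, uniformly in $n$, with the required H\"older gain.
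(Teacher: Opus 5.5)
Your architecture matches the paper's. You build the joint $\mathbb{R}^{2d}$-valued lift of $(B^H,\fX)$ from the mixed Wiener integrals and obtain an $L^2$ bound of order $|t-s|^{\alpha+H}$, which beats $|t-s|^{2\alpha}$ because $H>\alpha$. You upgrade it via Gaussian hypercontractivity and Kolmogorov, then push forward by the sum homomorphism (the paper's \textbf{plus} map).

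The genuine difference is the source of the $L^2$ bound. The paper uses the Wiener-integral estimate $\|\int_0^1 f\,\mathrm{d}B^H\|_{L^2}\le C\|f\|_{C^\gamma}$, $\gamma>1/2-H$, together with scaling; this needs exactly $\alpha+H>1/2$. You instead write the variance as a double integral of $x\otimes x$ against the covariance $R$ and apply 2D Young with $R$ of finite $1/(2H)$-variation. The 2D Young condition there is $\alpha+2H>1$, not $\alpha+H>1/2$ as you state. It does hold because $\alpha,H>1/3$, so your bound $\lesssim \|x\|_\alpha^2|t-s|^{2(\alpha+H)}$ stands, but your route is more restrictive than the paper's outside this parameter range.

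What your route buys you is that the same covariance machinery yields the convergence of the approximations, which the paper only calls straightforward. That machinery is uniformly controlled $\rho$-variation of the covariances of the piecewise linear $B^H_n$, plus interpolation. Your interlacing argument also makes independence from $(x_n)$ explicit; when interlacing, pair $x_n$ and $x_n'$ with the same $B^H_n$ so the interlaced sequence is admissible.

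One correction to Step 2. Moment bounds with an exponent $H'>\alpha$ cannot hold for the deterministic block of $\mathbf{Z}^n$, since they would force $x\in C^{H'}$. So you cannot feed all of $\mathbf{Z}^n$ into the rough-path Kolmogorov criterion. Instead, run the Kolmogorov chaining on the cross block alone, as a two-parameter process whose Chen defect $B^H_{s,u}\otimes x_{u,t}$ is a.s.\ $O(|t-s|^{\beta+\alpha})$ for any $\beta\in(\alpha,H)$. Treat the pure-$x$ block deterministically. This is what the paper does implicitly.
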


\begin{proof}
We first define the (random) joint lift  $(\fX,\fB^H)$ as a ($\alpha$-H\"older) geometric rough path with values in $\bR^d$.\ The case $H=1/2$ was treated by \cite{DOR15}, the case $H> 1/3$ is similar. The crucial point is to check that the mixed integral $\int_s^t (x_u - x_s) dB^H_u$ (since $x$ is deterministic, this is just a Wiener integral)  is a.s. of order $|t-s|^{2\alpha}$. Note that Wiener integrals against fractional Brownian motion satisfy 
\[
\left\| \int_0^1 f_s dB^H_s \right\|_{L^2(\Omega)} \leq C \|f \|_{C^{\gamma}}
\]
for any $\gamma > 1/2 - H$, see e.g. \cite[Theorem 1.9.9]{Mishura2008}. Since under our assumption $\alpha > 1/2-H$, we obtain by scaling that $\|\int_s^t (x_u - x_s) dB^H_u \|_{L^2(\Omega)} \leq C \|x\|_{\alpha} |t-s|^{\alpha+H}$, by Gaussianity this holds also in any $L^p$, $p<\infty$, and by the Kolmogorov criterion for rough paths, this implies that a.s. it is of order $|t-s|^{2\alpha}$.

In addition, it can be proven by straightforward arguments that this joint lift has the continuity properties that if $x_n$ are $C^1$ paths with $S_2(x_n) \to \fX$ in $\cC^{\alpha}_g$ and $B_n^H$ are piecewise linear (for instance) approximations of $B^H$, then the joint lift $S_{2}(x_n,B_n^H)$ converges to $(\fX,\fB^H)$ in probability and $\alpha$-H\"older rough path topology.

We then define $\fB^H \widehat\boxplus\;  \fX$  from the joint lift by $\fB^H +\fX = \textbf{plus}(\fX,\fB^h)$ where $\textbf{plus}$ is the natural extension of $(x,y) \in \bR^{2d} \mapsto x+y \in \bR^d$, see \cite{frizbook}, section 7.5. It holds in particular that
\[
 \fB^H \widehat\boxplus \; \fX = \lim_{n \to \infty} S_{2}(B^H_n+x_n)
\]
in $\mathcal{C}^{0,\alpha}_{\mathbf{1}} $ and in probability, where $x_n$ and $B^H_n$ are as in the statement of the theorem.
\end{proof}

We then have the following support property on the law of $\fB^H \widehat\boxplus \; \fX$. 

\begin{corollary}
For any $\frac 1 3 < \alpha < H \leq \frac 1 2$, any $\fX$ in $\mathcal{C}^{0;\alpha}_{\mathbf{1}}$, 
\[
\mbox{ The law of }\fB^{H} \widehat\boxplus\;  \fX \mbox{ has full support in } \mathcal{C}^{0;\alpha}_{\mathbf{1}}.
\]
\end{corollary}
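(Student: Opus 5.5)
We will derive the statement from Corollary~\ref{cor:main}, applied to $\mathcal{S}$, the support of the law of $\fB^H \widehat\boxplus\; \fX$ in $\mathcal{C}^{0;\alpha}_{\mathbf{1}}$. This is a closed set, and it is nonempty because $\mathcal{C}^{0;\alpha}_{\mathbf{1}}$ is Polish. We fix some $q$ with $1\le q<2$ and $\alpha + 1/q > 1$, for instance $q=1$. It then suffices to exhibit a set of shifts, dense in $C^{0;\alpha,q\text{-var}}_0$, contained in $\cK_{\alpha,q}(\mathcal{S})$. The natural candidate is the Cameron--Martin space $\cH_{B^H}$, or more concretely its smooth part $C^\infty_0\cap \cH_{B^H}$.

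\emph{Step 1: density.} For $H\le 1/2$, the Cameron--Martin space of fBm contains all smooth paths $h$ with $h(0)=0$; indeed $\cH_{B^H} \supset H^1_0$ when $H\le 1/2$. Now $C^\infty_0$ is dense in $C^{0;\alpha,q\text{-var}}_0$ by definition of that space, so the set of shifts we use is dense.

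\emph{Step 2: stability of $\mathcal{S}$ under a smooth Cameron--Martin shift $h$.} We will follow the argument of Corollary~\ref{cor:gauss} (alternatively, one can apply that corollary directly to the non-centered Gaussian processes $X_n = B^H_n + x_n$). The key identity is
\[
(\fB^H \widehat\boxplus\;\fX)\boxplus h = (\fB^H + h)\,\widehat\boxplus\;\fX \quad \text{a.s.}
\]
We check it on approximations. Both sides are limits of $S^2(B^H_n + x_n + h)$, by the Proposition together with \eqref{eq:pointwise} and the continuity of Young translation by the smooth path $h$. Moreover, the map $\omega\mapsto (\fB^H+h)\widehat\boxplus\;\fX$ is the same measurable functional as $\fB^H\widehat\boxplus\;\fX$, but evaluated at the shifted path $B^H+h$. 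To justify this, take as approximations of $B^H+h$ the piecewise linear interpolants of $B^H + h$; these equal $B^H_n + h_n$ with $h_n\to h$ in $C^{1}$. The independence of the limit from the approximating sequence, stated in the Proposition, then identifies the two sides.

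By the Cameron--Martin theorem, the law of $B^H+h$ is equivalent to that of $B^H$. Hence the law of $(\fB^H \widehat\boxplus\;\fX)\boxplus h$ is equivalent to the law of $\fB^H \widehat\boxplus\;\fX$, and the two laws have the same support. Combining this with the continuity of $\mathbf{Z}\mapsto \mathbf{Z}\boxplus h$, we get $\mathcal{S}\boxplus h \subset \overline{\mathcal{S}\boxplus h} = \operatorname{supp}\,\mathrm{Law}\bigl((\fB^H\widehat\boxplus\;\fX)\boxplus h\bigr) = \mathcal{S}$. Thus $h\in\cK_{\alpha,q}(\mathcal{S})$.

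\emph{Step 3.} Corollary~\ref{cor:main} now gives $\mathcal{S}=\mathcal{C}^{0;\alpha}_{\mathbf{1}}$.

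The main obstacle is Step 2. One has to make rigorous that the functional $\fB^H\widehat\boxplus\;\fX$, which is defined only as a limit in probability, commutes with Cameron--Martin shifts almost surely. The point is that the limit constructed in the Proposition must be a measurable function of the path that behaves well under shifts. If handling this directly is too delicate, the fallback is to verify the hypotheses of Corollary~\ref{cor:gauss} with $h_n = h$ constant in $n$. Its proof uses only that $S(X_n - h)$ has law absolutely continuous with respect to that of $S(X_n)$, with a uniformly controlled density, and this holds for $X_n = B^H_n + x_n$, since the deterministic shift $x_n$ does not affect the Cameron--Martin formula. The convergence $S(X_n-h)\to (\fB^H\widehat\boxplus\;\fX)\boxplus(-h)$ then follows from the Proposition applied with $x_n - h$ in place of $x_n$.
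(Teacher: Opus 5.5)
Your main argument is correct in substance but takes a different route from the paper. The paper never works with the limit object directly. It applies Corollary~\ref{cor:gauss} to $X_n=B^H_n+x_n$ and uses the Cameron--Martin formula for these finite-dimensional approximations. The shifts are $h_n$, the piecewise linear interpolants of $h\in\cH_{B^H}$, controlled by $\|h_n\|_{\cH_{B^H_n}}\le\|h\|_{\cH_{B^H}}$. Positivity comes from the H\"older-inequality lower bound, and density from the embedding $\cH_{B^H}\subset C^{0;\alpha,q\text{-var}}_0$ for $q>1/(H+1/2)$.

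You instead prove quasi-invariance of the law of $\fB^H\,\widehat{\boxplus}\,\fX$ itself under smooth shifts and feed this straight into Corollary~\ref{cor:main}. This is exactly the linear support argument from the introduction, and it is more economical. Only smooth shifts are used, so $q=1$ is allowed and density in $C^{0;\alpha,1\text{-var}}_0$ holds by definition. You only need $C^\infty_0\subset\cH_{B^H}$, which is true for $H\le 1/2$. You need neither the variation embedding of the Cameron--Martin space, nor the norm contraction under interpolation, nor uniform control of densities. What the paper's route buys is generality: Corollary~\ref{cor:gauss} is phrased purely in terms of the approximating Gaussian processes. It therefore never requires exhibiting the limit as a translation-compatible functional of a single Gaussian process.

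Two points need correcting.

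First, the identity in Step 2 does not follow from the independence-of-$(x_n)$ clause of the Proposition; it holds pathwise. Let $\Lambda_n$ be the (linear) interpolation operator and $\mu$ the law of $B^H$. Pick $n_k$ such that $F_k(\omega):=S^2(\Lambda_{n_k}\omega+x_{n_k})$ converges for $\mu$-a.e.\ $\omega$, and let $\Psi(\omega)$ be the limit on the Borel convergence set $A$. Then $F_k(\omega+h)=F_k(\omega)\boxplus\Lambda_{n_k}h$. Moreover $\|\Lambda_nh-h\|_{\alpha,1\text{-var}}\le\|(\Lambda_nh)'-h'\|_\infty\to0$; the interpolants converge in this norm, not in $C^1$. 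Joint continuity of $\boxplus$ then gives $\omega+h\in A$ and $\Psi(\omega+h)=\Psi(\omega)\boxplus h$ for every $\omega\in A$, with no probabilistic input. Cameron--Martin is only needed afterwards, to get $\Psi_*\mathrm{Law}(B^H+h)\sim\Psi_*\mu$.

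Second, your fallback fails as stated. $B^H_n$ lives on the finite-dimensional space of piecewise linear paths on the $n$-th mesh, so $h\in\cH_{B^H_n}$ only if $h$ is itself piecewise linear on that mesh. For a general smooth $h$, the laws of $X_n-h$ and $X_n$ are carried by disjoint affine subspaces and are mutually singular, so $h_n=h$ is not admissible. One must take $h_n=\Lambda_nh$, which is precisely the paper's proof.
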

\begin{proof}
We apply Corollary \ref{cor:gauss} with $X_n =  B^H_n + x_n$. Since the Cameron-Martin space of a Gaussian process does not depend on its mean, it holds that $\cH_{X_n} = \cH_{B^H_n}$.

Note that $\cH_{B^{H}} \subset C^{0;\alpha,q\text{-var}}$ for $\alpha < H$ and $q > 1/(H+1/2)$, see \cite{FV06embedding}, and it is a dense subset since it contains smooth functions, see e.g. \cite[Section 5.2]{Pic10}). 

In addition, for any $h \in \cH_{B^H}$, its piecewise linear approximations $h_n$ are in $ \cH_{B^H_n}$ with 
\[\|h_n\|_{ \cH_{B^H_n}} = \inf \left\{ \|k \|_{\cH_{B^H}}: k_n = h_n \right\} \leq \|h\|_{\cH_B^H}\]
(as a special case of the behaviour of Cameron-Martin spaces under continuous linear maps, see e.g. \cite[Proposition 3.71]{Hai09}).  It follows that all the assumptions of Corollary \ref{cor:gauss} are met and we can conclude.
\end{proof}

Note that $\fB^{H}\widehat \boxplus \; \fX$ it is a genuinely probabilistic object, depending on $\fX$, and cannot be in general written as a continuous function of $\fB^H$, so that the support theorem above is genuinely a new result and not a corollary of the full support property for the fBm lift $\fB^H$.

%
%
%

\bibliographystyle{alpha}
\bibliography{bibliography}

\end{document}